\newtheorem{theorem}{Theorem}
\numberwithin{theorem}{section}
\newtheorem{lemma}[theorem]{Lemma}
\newtheorem{proposition}[theorem]{Proposition}
\newtheorem{remark}[theorem]{Remark}
\newtheorem{corollary}[theorem]{Corollary}
\newtheorem{question}{Question}
\newtheorem{definition}[theorem]{Definition}
\newtheorem*{thm*}{Theorem A}
\newcommand\N{\mathbb N}
\renewcommand\AA{\mathscr A}
\newcommand\BB{\mathscr B}
\newcommand\CC{\mathscr C}
\newcommand\PP{\mathscr P}
\newcommand\Bb{\mathcal B}
\newcommand\Ff{\mathcal F}
\newcommand\Mm{\mathcal M}
\newcommand\Pp{\mathcal P}
\newcommand\Tt{\mathcal T}
\newcommand{\bZ}{\overline Z}
\newcommand\qi{\simeq_q}
\DeclareMathOperator\Adj{Adj}
\DeclareMathOperator\Diag{Diag}
\DeclarePairedDelimiter{\abs}{\lvert}{\rvert}
\DeclarePairedDelimiter{\norm}{\lVert}{\rVert}
\let\oldabs\abs
\def\abs{\@ifstar{\oldabs}{\oldabs*}}
\let\oldnorm\norm
\def\norm{\@ifstar{\oldnorm}{\oldnorm*}}
\let\oldbracket\bracket
\def\bracket{\@ifstar{\oldbracket}{\oldbracket*}}
\let\oldpare\pare
\def\pare{\@ifstar{\oldpare}{\oldpare*}}
\title{{\large Block structures of graphs and quantum isomorphism}}
\author{Amaury Freslon$ ^{1} $, Paul Meunier$ ^{2} $, Pegah Pournajafi$ ^{3} $}
\thanks{\noindent $ ^{1} $Université Paris-Saclay, CNRS, Laboratoire de Mathématiques d’Orsay, 91405 Orsay, France. \\ \indent $ ^{2}$ Department of Mathematics: Analysis, Logic and Discrete Mathematics, Ghent University, Krijgslaan 281, 9000 Gent, Belgium (formerly KU Leuven, Department of Mathematics, Celestijnenlaan 200B box 2400, BE-3001 Leuven), research supported by the grant 11PAL24N funded by the Research Foundation Flanders (FWO). \\ \indent $ ^{3} $Chaire Combinatoire, Collège de France, Université PSL, 75005, Paris, France}
\begin{document}

	\maketitle

	\vspace*{-.3cm}
	
	\begin{abstract}
		We prove that for every pair of quantum isomorphic graphs, their block trees and their block graphs are isomorphic, and that such an isomorphism can be chosen so that the corresponding blocks are quantum isomorphic -- in particular, 2-connectedness is preserved under quantum isomorphism. 
		We conclude with some corollaries, including obtaining some necessary conditions on a pair of quantum isomorphic, not isomorphic graphs with a minimal number of vertices.
	\end{abstract}

	\section*{Introduction}\label{sec:introduction}
	
	In recent years, the notion of quantum isomorphism of graphs gained a major place in the study of noncommutative properties of finite graphs. 
	Indeed, this seemingly candid definition turned out to be related to deep mathematical questions, such as the study of $\Ff$-isomorphism of graphs~\cite{Mancinska2019}, the representation of finitely presented groups~\cite{Mancinska2019QiNotI, Slofstra2020}, or the study of quantum automorphism groups of graphs~\cite{Lupini2020}. 
	While striking results allow for a global description of quantum isomorphism (see~\cite{Mancinska2019} for characterisations related to homomorphism count or representations of compact quantum groups), very little is explicitly known about it. 
	For instance, we still do not know a smallest pair of quantum isomorphic graphs which are not isomorphic: the smallest examples known have 24 vertices~\cite[Theorem~6.5]{Mancinska2019QiNotI}, and it follows from~\cite[Theorem B.2]{Meunier2023} that such graphs need at least 6 vertices. 
	
	To approach such problems, as well as for explicit computations of quantum automorphism groups of graphs, it is of great importance to understand how quantum isomorphism behaves with respect to the structural properties of graphs. 
	In this regard, the behaviour of quantum isomorphism with respect to the decomposition of a graph into its connected components, which was partially used in several works~\cite{Lupini2020, vanDobbenetal2023}, was only recently established in full generality~\cite{Meunier2023}, where it is shown that if two graphs are quantum isomorphic, then their connected components are in bijection and two-by-two quantum isomorphic themselves.

	It is then natural to wonder how quantum isomorphism behaves with respect to the decomposition of a connected graph into its 2-connected components, namely, its blocks. 
	We first establish that 2-connectedness, like connectedness, is preserved under quantum isomorphism.
	Our main result (\Cref{thm:pegah_conjecture}) then describes precisely how quantum isomorphism behaves with respect to two canonical block decompositions of a graph $G$: its block tree $\Tt(G)$ and its block graph $\Bb(G)$. 
	Here, the colours of the block tree refer to its natural bicolouring into blocks of $G$ and cut vertices of $G$.

	\begin{thm*}\label{intro_thm:pegah_conjecture}
		If $G$ and $H$ are two quantum isomorphic connected graphs, then
		\begin{enumerate}
			\item there exists a graph morphism $\alpha: \Tt(G) \rightarrow \Tt(H)$ such that
			\begin{itemize}
				\item $\alpha$ is an isomorphism between the block trees of $G$ and $H$ that preserves the colours,
				\item for every block $b$ of $G$, we have $b \qi \alpha(b)$,
			\end{itemize}
			\item there exists a graph morphism $\beta: \Bb(G) \rightarrow \Bb(H)$ such that 
			\begin{itemize}
				\item $\beta$ is an isomorphism between the block graphs of $G$ and $H$,
				\item for every block $b$ of $G$, we have $b \qi \beta(b)$.
			\end{itemize}
		\end{enumerate}
	\end{thm*}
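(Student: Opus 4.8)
The plan is to use the algebraic/quantum-group characterization of quantum isomorphism — two graphs $G$ and $H$ are quantum isomorphic iff there is a "magic unitary" $u = (u_{xy})_{x \in V(H), y \in V(G)}$, i.e. a matrix of projections in some $C^*$-algebra (or the relevant Hopf $*$-algebra) whose rows and columns sum to $1$, intertwining the adjacency matrices: $u A_G = A_H u$. From such a $u$ I want to extract, purely algebraically, a correspondence between the cut vertices, blocks, and the incidence structure of $G$ and $H$. The guiding analogy is the connected-components result of \cite{Meunier2023}: there one shows that the partition of $V(G)$ into connected components is "seen" by the magic unitary, in the sense that $u$ has a block-diagonal-like structure compatible with the components. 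I would try to prove the analogous statement that the "being in a common block" relation — and more generally the cut-vertex structure — is respected by $u$.

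The key steps, in order: (1) Recall or re-derive, in the quantum setting, that the distance-$2$ relation and hence "lies on a common cycle / $2$-connected" type relations are expressible via words in the adjacency matrix, so that $u$ intertwines them; in particular show that $u$ maps the projection onto a block of $G$ (as a subset of vertices) to a projection that is a sum of projections onto blocks of $H$, with matching edge-sets. The cleanest route is probably to first handle cut vertices: characterize a cut vertex $v$ of $G$ algebraically (e.g. via the structure of the neighborhood or via connectivity of $G - v$, which in turn must be phrased in a way stable under quantum isomorphism — here one likely uses the connected-components theorem applied to vertex-deleted subgraphs, being careful that quantum isomorphism of $G$ and $H$ does not immediately give quantum isomorphism of $G-v$ and $H-w$). (2) Build the map $\alpha$ on blocks by showing $u$ induces a bijection on blocks: if $b$ is a block of $G$, the "compression" of $u$ to $b$ and its image should be a magic unitary witnessing $b \simeq_q \alpha(b)$; this gives the second bullet of item (1) simultaneously. (3) Show $\alpha$ respects the incidence between blocks and cut vertices — i.e. that it extends to an isomorphism of the block tree $\Tt(G) \to \Tt(H)$ — by checking that adjacency in $\Tt(G)$ (a block containing a cut vertex) is transported correctly; the block tree being a tree, and the vertex/block bipartition being canonical, this should follow once the block-to-block and cut-vertex-to-cut-vertex bijections are known to be compatible. (4) Derive item (2), the block \emph{graph} $\Bb(G)$, from item (1): $\Bb(G)$ is obtained from $\Tt(G)$ by a canonical operation (contract/forget cut vertices, join two blocks when they share a cut vertex), so an isomorphism of block trees respecting blocks functorially yields $\beta$.

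**The main obstacle** I anticipate is step (1): quantum isomorphism of $G$ and $H$ does \emph{not} obviously descend to vertex-deleted subgraphs (there is no canonical "vertex" to delete on the $H$-side, since a magic unitary is not a genuine function), so one cannot naively say "$v$ is a cut vertex of $G$ $\Rightarrow$ its partner is a cut vertex of $H$." The resolution should be to work with the \emph{whole} graph and encode $2$-connectivity internally: characterize the block partition of $V(G)$ as the finest partition refining something $u$-invariant — perhaps the partition into equivalence classes of the relation "$x$ and $y$ lie on a common cycle" — and show this relation is a "quantum invariant" because its indicator can be written as a (non-)vanishing of entries of a noncommutative polynomial in $A_G$ (counting walks with a no-repeated-intermediate-vertex constraint is subtle, so one may instead induct on block-tree structure, peeling off a leaf block). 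A secondary obstacle is making the "compression of $u$ to a block" genuinely a magic unitary of the right size (rows/columns summing to $1$ within the block); this requires knowing the block of $G$ maps onto a single block of $H$ rather than spreading across several, which is exactly what the invariance of the block partition buys us. I expect the bookkeeping for the block tree isomorphism (step 3) and the passage to block graphs (step 4) to be routine once step (1) is in hand.
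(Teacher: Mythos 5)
Your step (1) is essentially the paper's key lemma, and your instinct about how to resolve the ``no canonical vertex to delete'' obstacle is the right one: the paper characterises ``$c$ separates $a$ from $b$'' by the identity $w_i(a,b;c)=w_i(a,b)$ for all $i$ (every walk from $a$ to $b$ passes through $c$), and the no-repeated-vertex worry you raise is circumvented by counting \emph{simple walks} (walks meeting the intermediate vertex only at the endpoints), whose generating identity $w_i(x,z;y)=\sum_{j+k+l=i}w_j^s(x,y)w_k(y,y)w_l^s(y,z)$ lets one solve recursively for the $w_j^s$ from quantities that Fulton's lemma shows are preserved whenever $u_{xa}u_{yb}\neq 0$. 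This yields exactly your desired statement (\Cref{lem:qi_blocks}): the relations ``lie in a common block'' and ``is a cut vertex'' are respected entrywise by the magic unitary. Your step (4), deducing the block graph statement from the block tree statement, is also how the paper proceeds.

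The genuine gap is in your steps (2)--(3). You propose to compress $u$ to a block $b$ and get a magic unitary witnessing $b\qi\alpha(b)$, noting yourself that this ``requires knowing the block of $G$ maps onto a single block of $H$ rather than spreading across several.'' But this is false as stated: blocks do \emph{not} partition $V(G)$ --- they overlap at cut vertices --- and for a cut vertex $x$ lying in blocks $b_1,\dots,b_m$ of $G$, the row $(u_{xa})_a$ is supported on cut vertices $a$ lying in $m$ different blocks of $H$, with no canonical matching between the $b_i$ and the blocks through $a$. So the partitioned-graph mechanism that works for connected components (where $\sum_{x\in C}u_{xa}$ is a projection constant on cells) does not apply to the family of blocks, the ``projection onto a block'' is not well defined, and neither the bijection $\alpha$ on blocks nor the compatibility with block--cut-vertex incidence in step (3) follows from step (1) alone. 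The paper's resolution is an induction that you only gesture at as a fallback: one anchors the graph at a canonical preserved object (the central block or central cut vertex, via \Cref{lem:center-s-block-is-preserved}), applies an operation $\Gamma$ that either splits the graph at the anchored cut vertex or deletes the edges of the anchored block --- producing a graph whose connected components each have strictly fewer blocks and for which the connected-component machinery \emph{does} apply --- shows $\Gamma$ preserves quantum isomorphism of anchored graphs, and reassembles the block trees via explicit tree operations. Without this (or some substitute construction actually producing $\alpha$ block by block together with the quantum isomorphisms $b\qi\alpha(b)$), the proposal establishes the invariance of the block \emph{relation} but not the existence of the isomorphisms $\alpha$ and $\beta$ claimed in the theorem.
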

	We also prove a local version of this result (\Cref{thm:degree_cut}). 
	As immediate corollaries of Theorem~A, we obtain that the number of blocks and cut vertices are preserved under quantum isomorphism, and that two quantum isomorphic but not isomorphic graphs with minimal number of vertices are either 2-connected or have two-by-two isomorphic blocks -- and their complements satisfy the same condition as well. 
	This narrows the search for a minimal pair of quantum isomorphic and not isomorphic graphs. 
	Finally, our main theorem is also key to the systematic study of quantum properties of block graphs and the computation of their quantum automorphism groups carried out in~\cite{FMP20250hyperbolic}.

	Our approach consists in finding a natural way to split a connected graph into a disjoint union of connected graphs with strictly less blocks than the original graph, which allows for inductive arguments. 
	Formally, this splitting operation is encoded in an operation $\Gamma$, detailed in~\Cref{sec:anchored_graphs}. 
	The key property of $\Gamma$ is that it preserves quantum isomorphisms, and it does so in an explicit way: given a quantum isomorphism $U$ from a graph $G$ to a graph $H$, one has an explicit quantum isomorphism $\Gamma(U)$ from $\Gamma(G)$ to $\Gamma(H)$. 
	This allows for an efficient treatment of the proof of our main theorem as well as its local version.

	Let us conclude this introduction by outlining the contents of this work. In~\Cref{sec:preliminaries}, we recall the necessary definitions and introduce our notations. 
	In~\Cref{sec:walks_2connectedness}, we show that 2-connectedness is preserved under quantum isomorphism (\Cref{coro:qi_2connected}). 
	In~\Cref{sec:partitioned_graphs}, we generalise a standard folklore technique by studying quantum isomorphisms preserving given partitions of the vertex sets of the graphs. 
	This technique was only used so far for the partitions into the vertex sets of the connected components, and we show how to use it in full generality in~\Cref{thm:partitioned_graphs}. 
	In~\Cref{sec:anchored_graphs}, we introduce the operation $\Gamma$ and prove that it preserves quantum isomorphisms in~\Cref{lem:qi_for_anchored_graphs}. 
	Finally, in~\Cref{sec:pegah_conjecture}, we obtain our main results.

	{\small
		\setcounter{tocdepth}{1}
		\tableofcontents
	}

	\section{Preliminaries}\label{sec:preliminaries}
	
	Let us start by introducing some notations and providing preliminary results. 
	
	\subsection*{Graph theoretical notions}
	
	For any graph theoretical notion not defined here, we refer to~\cite{BondyMurty}. 
	
	All graphs in this paper are finite and simple -- that is, loopless and without multiple edges. 
	We denote the vertex set and the edge set of a graph $ G $ respectively by $ V(G) $ and $ E(G) $, and its adjacency matrix by $\Adj(G)$. 
	We may write $ xy $ for an element $ \{x,y\} \in E(G) $. 
	For $ x \in V(G) $ the \emph{neighbourhood} of $ x $ is defined as $ N_G(x) = \{y \in V(G) \mid xy \in E(G) \} $ and the \emph{closed neighbourhood} of $ x $ is defined as $ N_G[x] = N_G(x) \cup \{x\} $. 
	
	For $k\geq 1$, a \emph{walk} in $G$ is a sequence $ x_1, \dots, x_k $ of vertices of $G$ such that $x_ix_{i+1} \in E(G)$ or $x_i = x_{i+1}$ for $ 1 \leq i \leq k-1 $, and its \emph{length} is defined to be $k-1$. We denote by $w_i(x,y)$ the number of walks of length $i \geq 0$ from $x$ to $y$ in $G$. It is straightforward to show that $ w_i(x,y)= [\Adj(G)^i]_{xy}$. 
	
	A \emph{graph morphism} (or simply a \emph{morphism}) from a graph $ G $ to a graph $ H $ is a function $ \phi: V(G) \to V(H) $ such that for all $ xy \in E(G) $ we have $ \phi(x)\phi(y) \in E(H) $. 
	The morphism~$ \phi $ is an isomorphism if and only if it is a bijection and for every $ x, y \in V(G) $ such that $ xy\notin E(G) $ we have $ \phi(x)\phi(y) \notin E(H) $. 
	
	We use the notation $ G[S] $ for the subgraph $ H $ of $ G $ induced by a set $ S \subseteq V(G) $, that is, $ V(H) = S $ and $ E(H) = \{ xy \in E(G) \mid x,y \in S \} $. Moreover, we write $G \oplus H$ for the disjoint union of $G$ and $H$, that is $V(G \oplus H) = V(G) \oplus V(H)$ and $E(G \oplus H) = E(G) \oplus E(H)$.

	\subsection*{Connectivity and block structures}
	
	A graph $ G $ is connected if for every $ x,y \in V(G) $, there exists a path from $ x $ to $ y $ in $ G $. 
	The \emph{connected components} of $ G $ are the maximal connected subgraphs of $ G $. 
	If $ G $ has at least two vertices, a \emph{cut vertex} of $ G $ is a vertex $ v $ such that $ G \setminus \{v\} $ has strictly more connected components than $ G $ does. 
	For a graph on one vertex, in this paper, we define its only vertex to be a cut vertex.

	We recall that every graph in this paper is finite, loopless, and without multiple edges. 
	In this setting, a graph $ G $ is \emph{2-connected} if for every $ v \in V(G) $, the graph $ G \setminus \{v\} $ is connected and nonempty. 
	A maximal 2-connected subgraph of $ G $ is called a \emph{block} of $ G $. By a slight abuse, we also use the term block to sometimes refer to the vertex set of said subgraph.
	We call a vertex $ v $ in a block $ b $ of $ G $ an \emph{internal vertex} if it is not contained in any block of $ G $ but $ b $. 
	We use the following properties of blocks of $ G $ several times without necessarily referring to the following proposition. 
	For the proof, see Proposition~5.3 in~\cite{BondyMurty} and the paragraph under its proof.
	
	\begin{proposition} \label{prop:properties_of_blocks}
		For every graph $ G $: 
		\begin{enumerate}
			\item every two blocks of $ G $ are either disjoint or have exactly one vertex in common,
			\item the blocks of $ G $ partition the edge-set of $ G $, 
			\item every cycle of $ G $ is contained in a single block of $ G $,
			\item a nonisolated vertex is an internal vertex of a block of $ G $ if and only if it is not a cut vertex of $ G $. 
		\end{enumerate}
	\end{proposition}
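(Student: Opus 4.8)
The plan is to deduce all four items from a single \emph{merging lemma}: if $H_1$ and $H_2$ are $2$-connected subgraphs of $G$ with $\abs{V(H_1)\cap V(H_2)}\ge 2$, then the subgraph $H_1\cup H_2$ (union of vertex sets and of edge sets) is again $2$-connected. To prove this I would fix two vertices $u\ne v$ in $V(H_1)\cap V(H_2)$; since the union of two connected graphs sharing a vertex is connected, $H_1\cup H_2$ is connected, and it remains to delete an arbitrary vertex $w$ and check connectedness of the rest. If $w\notin V(H_1)$, then $(H_1\cup H_2)\setminus\{w\}=H_1\cup(H_2\setminus\{w\})$ is a union of two connected graphs (using $2$-connectivity of $H_2$) both containing $u$ and $v$; the case $w\notin V(H_2)$ is symmetric; and if $w\in V(H_1)\cap V(H_2)$, then $(H_1\cup H_2)\setminus\{w\}=(H_1\setminus\{w\})\cup(H_2\setminus\{w\})$ is a union of two connected graphs sharing whichever of $u,v$ is not $w$. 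In every case the result is connected.

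Granting the merging lemma, (1) is immediate: if two distinct blocks $B_1,B_2$ shared two vertices, then $B_1\cup B_2$ would be a $2$-connected subgraph properly containing $B_1$ (properly, since $B_2\not\subseteq B_1$ by maximality of $B_2$), contradicting maximality of $B_1$. For (2), a single edge is a $2$-connected subgraph, hence lies in a maximal one, i.e.\ in a block, so every edge is in some block; and two distinct blocks cannot share an edge, since its two endpoints would be two common vertices, contradicting (1); so the edge sets of the blocks partition $E(G)$. For (3), a cycle on $n\ge 3$ vertices is $2$-connected (deleting any vertex leaves a path), hence is contained in a maximal $2$-connected subgraph, i.e.\ in a single block.

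The real content is (4), which I would phrase as the equivalence ``$v$ lies in at least two blocks $\iff$ $v$ is a cut vertex'' (for $v$ in a graph on at least two vertices; the one-vertex case is handled by the convention fixed in \Cref{sec:preliminaries}). For the forward direction, suppose $v$ is a cut vertex: then $v$ is not isolated, and deleting $v$ breaks its component into at least two components $C_1,C_2$ of $G\setminus\{v\}$; picking neighbours $a_i$ of $v$ with $a_i\in C_i$, the edges $va_1,va_2$ lie in blocks $B_1,B_2$ by (2), and if $B_1=B_2=:B$ then $2$-connectivity of $B$ gives an $a_1$--$a_2$ path in $B\setminus\{v\}\subseteq G\setminus\{v\}$, contradicting $C_1\ne C_2$; hence $B_1\ne B_2$. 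Conversely, if $v$ lies in distinct blocks $B_1,B_2$, then $V(B_1)\cap V(B_2)=\{v\}$ by (1), and (as in the proof of (1)) neither $B_i$ is a single vertex, so $v$ has neighbours $a\in V(B_1)\setminus\{v\}$ and $b\in V(B_2)\setminus\{v\}$ with $a\ne b$; any $a$--$b$ path in $G\setminus\{v\}$ would close up via $va$ and $vb$ into a cycle meeting both $B_1$ and $B_2$, contradicting (3) and (2); so $v$ separates $a$ from $b$ and is a cut vertex. The main obstacle, such as it is, lies exactly here: one must handle blocks, cycles, and vertex-connectivity at once, while (1)--(3) are one-line corollaries of the merging lemma. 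All of this is classical, and I would also point the reader to Proposition~5.3 in~\cite{BondyMurty}.
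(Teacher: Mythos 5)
Your proof is correct, and it takes a genuinely different route from the paper: the paper gives no argument at all for this proposition, deferring entirely to Proposition~5.3 of~\cite{BondyMurty} and the discussion following it. Your self-contained treatment is well organised around the merging lemma (two $2$-connected subgraphs sharing at least two vertices have $2$-connected union), from which (1)--(3) fall out by maximality, and your case analysis for the merging lemma and both directions of (4) is complete; in particular you correctly reduce (4) to ``lies in at least two blocks $\iff$ cut vertex'' and close the converse by producing a cycle that would have to lie in a single block by (3) while containing edges of two distinct blocks, contradicting (2). What your approach buys is independence from the external reference and compatibility with the paper's own (slightly nonstandard) conventions; what the citation buys is brevity.

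Two small caveats, both traceable to the paper's conventions rather than to your argument. First, ``$2$-connected'' as literally defined in \Cref{sec:preliminaries} does not explicitly require connectedness, so the two-vertex edgeless graph would qualify; you should read ``$2$-connected'' as ``connected and without cut vertex'' throughout (as Bondy--Murty do), otherwise the poset of $2$-connected subgraphs used in (1) and (2) misbehaves. Second, your parenthetical that the one-vertex case of (4) ``is handled by the convention'' is backwards: under the paper's convention the unique vertex of $K_1$ is simultaneously a cut vertex and an internal vertex of its only block, so (4) is literally false for $K_1$ and should simply be read with $\abs{V(G)}\geq 2$, exactly the regime in which your proof operates.
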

	
	The blocks of a graph form a tree-like structure. Indeed, this is captured by two canonical block structures of a graph $G$, namely the block tree and the block graph of $ G $. 
	
	Let $ G $ be a graph. We denote the set of its blocks by $ \BB(G) $ and the set of its cut vertices by~$\CC(G)$.
	
	First, we define a graph $ \Tt(G) $ (or $ \Tt $ if the underlying graph is clear from the context) by setting $ V(\Tt) = \mathscr B(G) \sqcup \mathscr C(G) $ and $ E(\Tt) = \{ \{c,b \} \mid b \in \mathscr{B}(G), c \in \mathscr{C}(G), c \in b \} $. It follows directly from~\Cref{prop:properties_of_blocks} that $ \Tt(G) $ is a tree and it is called the \emph{block tree of $G$} (see Section 5.2 of~\cite{BondyMurty} for more). 
	Notice that $ \Tt $ always has a natural bipartition (or equivalently, a proper 2-colouring) into vertices that are blocks of $ G $ and vertices that are cut vertices of $ G $. 
	Thus we will sometimes refer to the colour of the former as white and of the latter as black.

	Second, we define a graph $ \Bb(G) $ (or $ \Bb $ if the underlying graph is clear from the context) by setting $ V(\Bb) = \mathscr B(G) $ and $ E(\Bb) = \{ \{b,b'\} \mid b \neq b', b \cap b' \neq \varnothing  \} $, that is, $ \Bb(G) $ is the intersection graph of the blocks of $ G $. 
	It is called the \emph{block graph of $ G $}.
	
	A graph $ H $ is called a \emph{block graph} if there exists a graph $ G $ such that $ H $ is isomorphic to the block graph of $ G $. There are numerous equivalent characterisations of block graphs. In this paper, we use the following characterisation, from~\cite[Theorem A]{Harary1963blockgraphs}.
	
	\begin{proposition} \label{prop:characterisation_blockgraph}
		A graph $ G $ is a block graph if and only if every block of $ G $ is a complete graph.
	\end{proposition}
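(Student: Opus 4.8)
The plan is to prove both implications directly from the definitions and from \Cref{prop:properties_of_blocks}. For the forward direction, suppose $H$ is a block graph, so $H \cong \Bb(G)$ for some graph $G$; I want to show every block of $H$ is complete. Since completeness of blocks is an isomorphism invariant, it suffices to argue about $\Bb(G)$ itself. The key observation is that if $b_1, \dots, b_m$ are blocks of $G$ that pairwise intersect (i.e.\ form a clique in $\Bb(G)$), then they all share a common cut vertex of $G$: by \Cref{prop:properties_of_blocks}(1) any two of them meet in exactly one vertex, and a Helly-type argument shows these pairwise intersection points must all coincide. Indeed if $b_1 \cap b_2 = \{u\}$ and $b_1 \cap b_3 = \{v\}$ with $u \neq v$, then $u, v \in b_1$; following a path inside $b_1$ from $u$ to $v$ together with paths inside $b_2 \cap b_3 $'s... more carefully, one uses that $b_2$ and $b_3$ meet in a single vertex $w$, and builds a cycle through $u, v, w$ living in the union $b_1 \cup b_2 \cup b_3$ but in no single block, contradicting \Cref{prop:properties_of_blocks}(3). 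Once every clique of $\Bb(G)$ corresponds to a set of blocks through a common cut vertex, it follows that every maximal set of pairwise-intersecting blocks around a fixed cut vertex $c$ induces a complete subgraph, and moreover these are exactly the blocks of $\Bb(G)$ (any edge of $\Bb(G)$ extends to such a clique, and distinct cut vertices give blocks of $\Bb(G)$ overlapping in at most one vertex of $\Bb(G)$). Hence every block of $\Bb(G)$, and therefore of $H$, is complete.

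For the converse, suppose every block of $G$ is a complete graph; I must exhibit a graph $G'$ with $\Bb(G') \cong G$. The natural guess is to take $G' = G$ itself and show $\Bb(G) \cong G$ under the map sending a vertex $x$ of $G$ to... this does not quite typecheck since vertices of $\Bb(G)$ are blocks. Instead the cleaner route is: build $G'$ by subdividing, or rather by attaching to $G$ a structure whose block graph recovers $G$. A standard construction works: for each edge $xy$ of $G$ that is a block (every edge lies in a unique block which, being complete, could be a single edge or a larger clique), and for each maximal clique $K$ of $G$ which is a block, we want $K$ to become a vertex of $\Bb(G')$. So take $G'$ to be the graph whose vertex set is $V(G) \sqcup \BB(G)$, with $x \in V(G)$ adjacent to $b \in \BB(G)$ iff $x \in b$, and with the blocks $\BB(G)$ forming an independent set and $V(G)$ forming an independent set — i.e.\ $G'$ is essentially the incidence bipartite graph of vertices and blocks of $G$. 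One then checks the blocks of $G'$ are exactly the stars around each $x \in V(G)$ together with... actually the blocks of $G'$ will be the "edge-blocks" $\{x, b\}$ which are too small. The correct known construction (from \cite{Harary1963blockgraphs}) is more delicate; I would instead follow Harary's original argument, or observe that a graph in which all blocks are complete is precisely the block graph of the graph obtained by replacing each block-clique $K_n$ by a star $K_{1,n}$, so that the clique re-emerges as the block graph of the star. The verification that this substitution is compatible across shared cut vertices is the routine bookkeeping.

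The main obstacle is the converse direction: producing the witness graph $G'$ and verifying $\Bb(G') \cong G$ requires a careful construction that simultaneously realises each complete block of $G$ as a block of $\Bb(G')$ and realises the intersection pattern of $G$ (shared cut vertices) correctly. The forward direction, by contrast, is a clean Helly-type argument using only parts (1) and (3) of \Cref{prop:properties_of_blocks}. Since this proposition is cited verbatim from \cite{Harary1963blockgraphs} and is used only as a black box in the sequel, in the paper I would simply state it with the reference and not reproduce the proof; the sketch above records how one would recover it if needed.
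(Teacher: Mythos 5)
The paper does not actually prove this proposition: it states it with a pointer to Theorem A of \cite{Harary1963blockgraphs} and uses it as a black box afterwards, which is exactly what you propose in your closing remark, so on that level your plan matches the paper. Since you nevertheless supply a sketch, let me assess it. The forward direction is sound: the Helly-type argument (pairwise-intersecting blocks of $G$ share a common cut vertex, because otherwise one assembles a cycle through $u$, $v$, $w$ lying in $b_1\cup b_2\cup b_3$ but in no single block, contradicting items (1) and (3) of \Cref{prop:properties_of_blocks}) is correct, and it is essentially what the paper's subsequent lemma on $\mu$ and $\nu$ encodes.

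The converse, however, has a genuine gap precisely at the step you wave off as routine bookkeeping. Replacing each block-clique of $G$ by a star and gluing at shared vertices does not produce a witness: if $v$ lies in two blocks $B$ and $B'$ of $G$, then in the glued graph $v$ is a leaf of both stars, hence a degree-$2$ cut vertex, and the edges $vc_B$ and $vc_{B'}$ are two \emph{distinct} blocks of the witness graph, whereas they must correspond to the single vertex $v$ of $G$. The repair is to let the blocks of the witness correspond to vertices of $G$ rather than to star-edges: introduce one new vertex $c_B$ per block $B$ of $G$, and for each $v\in V(G)$ contained in blocks $B_1,\dots,B_m$ put into $G'$ a complete graph $Q_v$ on $\{c_{B_1},\dots,c_{B_m}\}$, adding one extra pendant vertex when $m\leq 1$ so that $Q_v$ is at least an edge. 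Two such cliques meet in at most one vertex, since two vertices of $G$ lie in at most one common block, so the $Q_v$ are exactly the blocks of $G'$; and $Q_v\cap Q_w\neq\varnothing$ if and only if $v$ and $w$ lie in a common (complete) block of $G$, if and only if $vw\in E(G)$. Hence $v\mapsto Q_v$ is an isomorphism from $G$ to $\Bb(G')$. Your single-block computation $\Bb(K_{1,n})=K_n$ is the special case of this construction in which every vertex lies in exactly one block.
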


	\subsection*{Center of graphs}\label{subsec:center_rho}
	
	Every graph $ G $ is equipped with a distance $ d = d_G: V(G) \times V(G) \to~[0, +\infty] $ called the \emph{graph distance} where for $ x, y \in V(G) $, the distance $ d(x,y) $ is the length of a shortest path from $x$ to $y$ in $ G $. 
	Notice that $ G $ is connected if and only if $d_G(\cdot , \cdot) < +\infty $. 
	For every vertex $ x \in V(G) $, the \emph{eccentricity} of $ x $, denoted by $ e(x) $, is defined to be its distance to the vertex furthest from it, that is $e(x) = \max_{y \in V(G)} d(x,y) $. 
	The \emph{center} of $ G $, denoted by $Z(G)$, is the set of vertices with minimum eccentricity, that is $ Z(G) = \{x \in V(G) \mid \forall y \in V(G) \ e(y) \geq e(x) \} $.
	We recall the following theorem.
	
	\begin{theorem}\label{thm:center_block}
		Let $G$ be a connected graph on at least two vertices. 
		There exists a block $b$ of $G$ such that $Z(G) \subseteq b$. 
		Moreover, the center $Z(G)$ is either a singleton consisting in a cut vertex or there exists a unique block containing it.
	\end{theorem}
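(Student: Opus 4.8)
The statement to prove is \Cref{thm:center_block}: for a connected graph $G$, some block $B$ contains the whole center $Z(G)$, and moreover the center is either a single cut vertex or is contained in a unique block. The natural route is to use the standard fact that the center of any connected graph lies inside a single block, via the classical argument about how eccentricities change when one "folds" a graph at a cut vertex. Rather than reprove that from scratch, I would structure the argument around the effect of a cut vertex on distances.

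First I would establish the key metric lemma: if $r$ is a cut vertex of $G$ and $C_1,\dots,C_k$ are the connected components of $G\setminus\{r\}$ with $G_i=G[V(C_i)\cup\{r\}]$, then every shortest path between vertices lying in different $G_i$'s passes through $r$; consequently, for any $v$, an eccentric vertex $E_G(v)$ can be chosen outside the component of $G\setminus\{r\}$ that contains $v$ (when $v\neq r$), hence $e_G(v)=d_G(v,r)+\max_{u\notin G_{i(v)}}d_G(r,u)$. From this one deduces that moving from $v$ toward $r$ (one step along a shortest $v$–$r$ path) strictly decreases eccentricity as long as $v\neq r$ and $v$ is separated from its eccentric vertices by $r$. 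The upshot is: no vertex $v$ with the property that $r$ separates $v$ from all of $E_G(v)$ can lie in $Z(G)$; equivalently, for each cut vertex $r$, all vertices of $Z(G)$ lie in a single "branch" $G_i$ together with $r$ — in fact in $V(C_{i})\cup\{r\}$ for one particular $i$, or $Z(G)=\{r\}$.

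Next I would translate this into a statement about the block tree $\Tt(G)$. Recall $\Tt(G)$ is a tree whose vertices are the blocks and cut vertices of $G$. The previous paragraph says precisely that at every cut vertex $r$ (a black vertex of $\Tt$), the center $Z(G)$ is "concentrated" on one side: there is a component of $\Tt(G)\setminus\{r\}$ such that every block of $G$ meeting $Z(G)$ lies in that component (or $Z(G)=\{r\}$). A standard "parity/Helly on trees" argument then finishes it: the family of subtrees $\{$blocks and cut vertices lying weakly on the $Z$-side of $r\}$, indexed by cut vertices $r$, together with, for each $v\in Z(G)$, the set of blocks/cut-vertices containing $v$, has the Helly property in the tree $\Tt(G)$, so there is a common vertex of $\Tt(G)$ — which is either a block $B$ of $G$ with $Z(G)\subseteq B$, or a cut vertex $c$ with $Z(G)=\{c\}$. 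For the uniqueness clause, if $Z(G)$ is not a single cut vertex then it contains an edge $xy$ of $G$ (two adjacent center vertices) or at least two vertices; since every block containing $Z(G)$ must contain that edge, and blocks partition the edge set, the block is unique — and if $Z(G)$ is a single non-cut vertex it lies in exactly one block by \Cref{prop:properties_of_blocks}(4).

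The main obstacle is the metric lemma in the first step — getting the strict decrease of eccentricity along the path toward a cut vertex cleanly, including the edge cases where $v$ is itself close to $r$ or where some eccentric vertices of $v$ lie in $v$'s own branch. One has to argue that an eccentric vertex can always be chosen in the "heaviest" branch different from $v$'s (using $k\ge 2$), and handle the degenerate case where $G$ has very few vertices. Once that is nailed down, the passage to $\Tt(G)$ and the Helly argument are routine; alternatively, this theorem is classical (Jordan-type center theorems), so I would be comfortable citing it, but since the paper's conventions treat a one-vertex graph's vertex as a cut vertex, I would double-check the base cases $|V(G)|\le 2$ by hand.
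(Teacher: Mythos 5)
Your proposal is essentially the paper's proof: for the containment of the center in a block the paper simply cites the classical result (Theorem~2.2 of Buckley--Harary), which is your stated fallback, and for the ``moreover'' clause both you and the paper run the same short case analysis on \Cref{prop:properties_of_blocks} (a single non-cut center vertex lies in a unique block by item~(4); two distinct center vertices force uniqueness because two blocks share at most one vertex by item~(1)). Two small cautions if you were to write out your optional from-scratch argument: the intermediate claim that an eccentric vertex of $v$ can always be chosen outside $v$'s component of $G\setminus\{r\}$ is false (take a long path with $r$ near one end and a pendant vertex attached at $r$; a vertex adjacent to $r$ on the long side has its unique eccentric vertex in its own component), so the Jordan-type argument must be phrased as you do later, namely that a center vertex cannot be separated by $r$ from an eccentric vertex \emph{of $r$}; and in the uniqueness step the phrase ``must contain that edge'' only applies when two center vertices are adjacent --- the correct and sufficient fact is that two distinct vertices lie in at most one common block, which is exactly what the paper uses.
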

	
	\begin{proof}
		For the proof of the fact that the center is contained in a block, see Theorem 2.2 in~\cite{BuckleyHarary1990Distancesingraphs}. 
		If $ Z(G) $ contains an internal vertex $ u $, then Item (4) of~\Cref{prop:properties_of_blocks} implies that there exists a unique block $ b $ (namely the unique block containing $ u $) such that $ Z(G) \subseteq b $. 
		If $ Z(G) $ has no internal vertices, then either it contains exactly one cut vertex, in which case there is nothing to prove, or it contains at least two distinct cut vertices $ v $ and $ v' $. 
		In this case, let $ b $ and $ b' $ be blocks containing $ Z(G) $, then $ v$ and $ v'$ are both in $ b \cap b' $, which, by Item (1) of~\Cref{prop:properties_of_blocks} is only possible if $ b = b' $, which completes the proof. 
	\end{proof}	
	
	For a connected graph $G$, we define $\bZ(G)$ as follows: if $Z(G)$ consists of a unique cut vertex, then we define $\bZ(G)=Z(G)$. Otherwise, by~\Cref{thm:center_block}, there exists a unique block $b$ of $G$ containing $Z(G)$, in which case, we define $\bZ(G) = b$. Note that $\bZ(G)$ is either a block or a cut vertex of $G$ and, as we will see in the context of anchored graphs in~\Cref{sec:anchored_graphs}, $(G, \bZ(G))$ is an anchored graph.

	\subsection*{Magic unitaries and quantum isomorphism}
	
	We now recall the notion of quantum isomorphism of graphs. We use the definition from~\cite{Lupini2020}, where the authors relate it to the notion of quantum automorphism groups of graphs, as introduced by Banica in~\cite{Banica2004qaut}, extending work of Bichon on quantum automorphism groups of graphs~\cite{Bichon1999qaut}.
	
	The notion of quantum isomorphism relies on that of a magic unitary, first appearing in the work of Wang~\cite{Wang1998}. Let $X$ be a unital $C^*$-algebra. 
	A \emph{magic unitary} with coefficients in $X$ is a matrix $U = (u_{ij})_{i\in I, j\in J} $ where $ I $ and $ J $ are finite sets and such that for all $ i \in I $ and all  $ j \in J $:
	\begin{enumerate}
		\item $ u_{ij}^2 = u_{ij} $ and $ u_{ij}^* = u_{ij} $,
		\item $ \sum_{k \in I} u_{kj}  = 1 = \sum_{k \in J} u_{ik} $.
	\end{enumerate}  
	
	Notice that (2) implies that $ |I| = |J| $. 
	Also, it is well-known that projections that form a partition of unity in a $C^*$-algebra are orthogonal, hence the rows and the columns of a magic unitary are orthogonal. 
	This implies that $U$ is a unitary in $\Mm_{I,J}(X)$. 
	
	A \emph{quantum isomorphism} from a graph $G$ to a graph $H$ with the same number of vertices is a magic unitary $U$ indexed by $ V(H) \times V(G) $ such that $U \Adj(G) = \Adj(H) U$. 
	
	Man\v{c}inska and Roberson~\cite{Mancinska2019} obtained a purely combinatorial equivalent definition, which we mention here, even though not used in the rest of this article. 
	\begin{theorem}[Theorem 7.16 in~\cite{Mancinska2019}]
		Two graphs $ G $ and $ H $ are quantum isomorphic if and only if for every planar graph $ P $ the number of morphisms from $P$ to $ G $ is equal to the number of morphisms from $ P $ to $ H $. 
	\end{theorem}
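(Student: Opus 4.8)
The final statement is the theorem of Man\v{c}inska and Roberson; in the paper it is used as a black box, but here is how I would organise a proof. The equivalence has a relatively soft direction --- a quantum isomorphism forces equality of homomorphism counts from all planar graphs --- and a genuinely hard direction --- equality of those counts produces a magic unitary intertwining the adjacency matrices. Both are best set up inside the monoidal category whose objects are finite sets of ``terminals'' and whose morphisms are graphs equipped with an input terminal set and an output terminal set, composed by gluing along matching terminals and tensored by disjoint union. Every graph $F$ on $n$ vertices, regarded as such a morphism with all its vertices as outputs, carries a homomorphism tensor $\Theta^F_G \in (\C^{V(G)})^{\otimes n}$, namely the indicator, at each assignment $V(F)\to V(G)$, of whether it is a graph morphism; the assignment $F \mapsto \Theta^F_G$ is monoidal, and closing up all terminals recovers $\hom(F,G)$ as the sum of the entries of $\Theta^F_G$.

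For the forward direction, let $U$ be a magic unitary indexed by $V(H)\times V(G)$ with $U\Adj(G) = \Adj(H)U$. The two magic-unitary axioms say precisely that $U$ intertwines the multiplication, unit, comultiplication and counit of the canonical (special commutative Frobenius) algebra structures on $\C^{V(G)}$ and $\C^{V(H)}$, while the hypothesis says it intertwines the morphism attached to a single edge; these morphisms generate, inside the category above, exactly the \emph{planar} graphs-with-terminals --- the one thing one cannot produce is a crossing (swap) morphism, and a magic unitary need not commute with it, which is why the non-planar case genuinely fails. A diagrammatic induction on a planar embedding of a planar bilabelled graph $P$ then gives $U^{\otimes l}\,\Theta^P_G = \Theta^P_H\,U^{\otimes k}$; specialising $P$ to a closed planar graph and collapsing the tensor powers of $U$ via $\sum_v u_{wv} = 1$ yields $\hom(P,G)\cdot 1 = \hom(P,H)\cdot 1$ in the ambient $C^*$-algebra, hence $\hom(P,G) = \hom(P,H)$.

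For the converse, assume $\hom(P,G) = \hom(P,H)$ for every planar $P$. The target is a nonzero $C^*$-algebra $X$ carrying a magic unitary $U$ with $U\Adj(G) = \Adj(H)U$; equivalently, one must show that the universal $*$-algebra on generators $u_{wv}$, $w\in V(H)$, $v\in V(G)$, subject to the magic-unitary and intertwining relations, admits a Hilbert-space representation, for which it suffices to produce a state on it. The plan is to feed the hypothesis into a Tannaka--Krein/GNS reconstruction: the equality of the scalar invariants attached to closed planar diagrams, evaluated against $G$ and against $H$, means that the two candidate intertwiner categories (the quantum planar algebras) carry the same ``evaluation'' functionals; realising prospective values of the $u_{wv}$ as homomorphism tensors of suitable one-input one-output planar gadgets, one checks from the hypothesis that the resulting linear functional on the universal algebra is well-defined, $*$-preserving and positive, and then its GNS construction produces the magic unitary, the intertwining relation being read off the diagrammatic identity for the edge gadget.

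The real obstacle is exactly this last step: the well-definedness and positivity of the reconstructed functional, i.e. upgrading numerical equality of scalar graph invariants to the existence of an operator-algebraic object. This is the technical heart of Man\v{c}inska--Roberson and it leans essentially on the characterisation of planarity as ``generated without crossings''. Everything else --- the categorical bookkeeping of homomorphism counts, the Frobenius-algebra identities, and the whole forward direction --- is formal; the reconstruction is not, and is where I would expect essentially all of the effort to go.
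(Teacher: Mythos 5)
The paper does not prove this statement at all: it is quoted verbatim as Theorem 7.16 of Man\v{c}inska--Roberson and used as a black box, so there is no internal proof to compare yours against. Judged on its own terms, your proposal is a faithful and well-organised \emph{roadmap} of the known proof, but it is not a proof. You yourself flag the crux --- upgrading the numerical equality $\hom(P,G)=\hom(P,H)$ for all planar $P$ to the existence of a state on the universal $*$-algebra generated by the $u_{wv}$ subject to the magic-unitary and intertwining relations --- and then you stop there. That step is not a routine GNS argument: one has to show that the prospective functional, defined via homomorphism tensors of planar gadgets, is well-defined on the quotient by the relations and is positive, and in Man\v{c}inska--Roberson this consumes most of the paper (it runs through quantum permutation groups, the orbital structure, and a Frobenius/projective-representation argument rather than a bare Tannaka--Krein reconstruction). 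Asserting that ``one checks'' well-definedness and positivity is precisely the gap.

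A second, smaller gap sits in the direction you call soft. Your induction $U^{\otimes l}\,\Theta^P_G = \Theta^P_H\,U^{\otimes k}$ rests on the claim that the bilabelled graphs reachable from the single-edge generator and the Frobenius structure maps, under composition and tensor but without the swap, are \emph{exactly} the planar ones. That generation theorem is itself a nontrivial topological--combinatorial result in Man\v{c}inska--Roberson (their Theorem 6.7), not a formality; without it the forward direction only yields equality of counts for the graphs you can exhibit as crossing-free diagrams, and the converse loses its link to planarity entirely. So both halves of your outline lean on substantial unproved inputs: the generation theorem for the forward implication, and the positivity/reconstruction argument for the converse. The architecture is right; the proof is not there.
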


	\section{Walks in graphs and 2-connectedness} \label{sec:walks_2connectedness}
	
	In this section, after proving some preliminary lemmas about the number of walks between given vertices in a graph, we obtain two essential results. 
	Firstly, we relate quantum isomorphism between two graphs to their blocks and cut vertices (\Cref{lem:qi_blocks}). 
	Secondly, using this, we prove that 2-connectedness is preserved under quantum isomorphism (\Cref{coro:qi_2connected}). 
	
	\subsection{Walks in graphs}
	Let $G$ be a graph, let $x$, $y$, $z\in V(G)$, and fix $i\geq 0$.  
	We denote by $w_i(x,z;y)$ the number of walks of length $i$ from $x$ to $z$ going through $y$, and denote by $w_i^s(x,z;y)$ the number of walks of length $i$ from $x$ to $z$ going through $y$ exactly once. 
	In particular, we have that $w_i^s(x,x;x) = 0 $ if $ i \geq 1$ and that $w_i^s(x,x;x) = 1 $ if $ i=0$. 
	Moreover, it is straightforward that for any $x, y, z \in V(G)$, we have that $w_i(x,z;y)=w_i(z,x;y) $ and $ w_i^s(x,z;y)=w_i^s(z,x;y)$.
	
	\begin{lemma}\label{lem:walk_formula}
		For every $i\geq 0$, we have:
		\[ w_i(x,z;y) = \sum_{j+k+l=i} w_j^s(x,y;y)w_k(y,y)w_l^s(y,z;y).\]
	\end{lemma}
	
	\begin{proof}
		Observe that every walk of length $i\geq 0$ from $x$ to $z$ going through $y$ decomposes uniquely as the concatenation of a walk from $x$ to $y$ passing through $y$ only once (at the end), a walk from $y$ to $y$, and a walk from $y$ to $z$ passing through $y$ only once (at the beginning). Denoting the respective lengths of these walks by $j$, $k$, and $l$, we have $j+k+l=i$. 
		Conversely, given three such walks, their concatenation gives a walk of length $i$ from $x$ to $z$ passing through $y$. Note that these correspondences are inverse of each other, hence we obtain a bijection between such triples of walks and the walks of length $i$ from $x$ to $z$ going through $y$. 
		This leads to the desired formula.
	\end{proof}
	
	The following lemma is based on a claim used in the proof of Theorem 6 of~\cite{Kiefer2019WL}. We prove it here for the sake of completeness.
	
	\begin{lemma}\label{lem:nb_walks}
		Let $G$ and $H$ be two graphs and let $x$, $y$, $z\in V(G)$, and $a$, $b$, $c\in V(H)$. 
		Assume that for every $i\geq 0$, we have:
		\begin{enumerate}
			\item $w_i(x,y) = w_i(a,b)$,
			\item $w_i(y,y) = w_i(b,b)$,
			\item $w_i(y,z) = w_i(b,c)$.
		\end{enumerate}
		Then for every $i\geq 0$ we have $w_i(x,z;y) = w_i(a,c;b)$. 
	\end{lemma}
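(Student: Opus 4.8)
The plan is to combine the walk-decomposition formula from \Cref{lem:walk_formula} with the observation that the quantities appearing on the right-hand side of that formula---namely the numbers of simple walks $w_j^s(x,y)$, $w_l^s(y,z)$ and the numbers of closed walks $w_k(y,y)$---can each be recovered from the three sequences of walk counts that we are assuming to agree. So the first step is to show that, under hypothesis (2), we have $w_k(y,y) = w_k(b,b)$ for all $k$; but this is literally hypothesis (2), so nothing is needed there. The real work is to show that hypotheses (1) and (2) together force $w_j^s(x,y) = w_j^s(a,b)$ for all $j$, and that hypotheses (2) and (3) together force $w_l^s(y,z) = w_l^s(b,c)$ for all $l$. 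Once these two claims are in hand, \Cref{lem:walk_formula} applied on both sides gives, for each $i$,
\[
w_i(x,z;y) = \sum_{j+k+l=i} w_j^s(x,y)\,w_k(y,y)\,w_l^s(y,z) = \sum_{j+k+l=i} w_j^s(a,b)\,w_k(b,b)\,w_l^s(b,c) = w_i(a,c;b),
\]
which is exactly the desired conclusion.

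The key step is therefore: \emph{if $w_i(x,y) = w_i(a,b)$ and $w_i(y,y) = w_i(b,b)$ for all $i$, then $w_i^s(x,y) = w_i^s(a,b)$ for all $i$}. To prove this I would set up a recursion expressing the ordinary walk counts $w_i(x,y)$ in terms of the simple walk counts $w_j^s(x,y)$ and the closed walk counts $w_k(y,y)$. Indeed, every walk from $x$ to $y$ of length $i$ decomposes uniquely by looking at its \emph{first} visit to $y$: it is a simple walk from $x$ to $y$ of some length $j \le i$, followed by an arbitrary closed walk at $y$ of length $i-j$. This yields
\[
w_i(x,y) = \sum_{j=0}^{i} w_j^s(x,y)\, w_{i-j}(y,y),
\]
valid for $i \ge 1$ (and for $i=0$ with the convention $w_0^s(x,y)=0$ when $x \ne y$; one should treat the case $x=y$ separately or note $w^s_0(x,x)=1$). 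In generating-function language, writing $W(t) = \sum_i w_i(x,y) t^i$, $S(t) = \sum_j w_j^s(x,y) t^j$, $C(t) = \sum_k w_k(y,y) t^k$, this says $W(t) = S(t)\,C(t)$ (up to the bookkeeping at $i=0$). Since $C(0) = w_0(y,y) = 1$, the series $C(t)$ is invertible as a formal power series, so $S(t) = W(t)/C(t)$ is determined by $W$ and $C$. As $W$ is determined by hypothesis (1) and $C$ by hypothesis (2), and these agree with the corresponding series for $a,b$, we get $S(t)$ equals its counterpart, i.e.\ $w_j^s(x,y) = w_j^s(a,b)$ for all $j$. The argument for $w_l^s(y,z) = w_l^s(b,c)$ is identical, decomposing a walk from $y$ to $z$ by its \emph{last} visit to $y$ and using hypotheses (2) and (3).

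I expect the main obstacle to be purely bookkeeping: getting the $i=0$ (and small $i$) terms right in the first-visit decomposition, since a simple walk of length $0$ only exists when the endpoints coincide, and being careful that the formal-power-series inversion is legitimate (it is, because the constant term $w_0(y,y)=1$ is a unit in whatever coefficient ring we use). An alternative that avoids generating functions entirely is to prove $w_j^s(x,y) = w_j^s(a,b)$ by strong induction on $j$: from the identity $w_i(x,y) = \sum_{j=0}^i w_j^s(x,y) w_{i-j}(y,y)$ one can solve for $w_i^s(x,y)$ in terms of $w_i(x,y)$ and the values $w_j^s(x,y)$ for $j<i$ (using $w_0(y,y)=1$), so the inductive hypothesis together with hypotheses (1) and (2) closes the induction. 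Either way the conclusion follows by plugging back into \Cref{lem:walk_formula}.
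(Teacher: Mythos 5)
Your proposal is correct and follows essentially the same route as the paper: the paper also reduces to showing $w_j^s(x,y)=w_j^s(a,b)$ (and symmetrically $w_l^s(y,z)=w_l^s(b,c)$) via the first-visit convolution identity $w_i(x,y)=\sum_{j+k=i}w_j^s(x,y)w_k(y,y)$, solves for $w_i^s$ by induction using $w_0(y,y)=1$, and then concludes with \Cref{lem:walk_formula}. Your generating-function phrasing is just a repackaging of that same inversion, and your "alternative" strong induction is literally the paper's argument.
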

	
	\begin{proof}
		First, note that for every $j < d_G(x,y)$, we have $w_j(a,b) = w_j(x,y) = 0$ and for $ j = d_G(x,y)$, we have $w_j(a,b) = w_j(x,y) >0$. Hence $d_H(a,b) = d_G(x,y)$.
		
		Let us show that for all $i\geq 0$ we have $w_i^s(x,y;y) = w_i^s(a,b;b)$. Note that for $i<d_G(x,y) = d_H(a,b)$, we have $w_i^s(x,y;y) = 0 = w_i^s(a,b;b)$, so the equality holds. We prove the equality for $i \geq d_G(x,y)$ by induction on $i$. 
		
		Let $i=d_G(x,y) =d_H(a,b)$. 
		In this case, any walk of length $i$ from $x$ to $y$ (resp.\ $a$ to $b$) is minimal, so it goes through $y$ (resp.\ $b$) only once. Hence, we have $$w_i^s(x,y;y) = w_i(x,y) = w_i(a,b) = w_i^s(a,b;b),$$ as desired. 
		
		Now consider $ i \geq d_G(x,y)$ and assume that $w_j^s(x,y;y) = w_j^s(a,b;b)$ for any $j \leq i$. Let us show that $w_{i+1}^s(x,y;y) = w_{i+1}^s(a,b;b)$.
		Setting $z=y$ in~\Cref{lem:walk_formula}, we obtain
		$$
		w_{i+1}(x,y) = w_{i+1}(x,y;y) = \sum_{j+k+l = i+1} w_j^s(x,y;y)w_k(y,y)w_l^s(y,y;y) = \sum_{j+k = i+1} w_j^s(x,y;y)w_k(y,y).
		$$
		Similarly, we obtain $w_{i+1}(a,b) = \sum_{j+k = i+1} w_j^s(a,b;b)w_k(b,b)$. 
		Using the induction hypothesis and the assumptions of the statement, we reach
		\begin{align*}
			w_{i+1}^s(x,y;y) &= w_{i+1}^s(x,y;y)w_0(y,y) \\ 
			& = w_{i+1}(x,y)-\sum_{j=0}^i w_j^s(x,y;y)w_{i+1-j}(y,y)\\
			&= w_{i+1}(a,b) -\sum_{j=0}^i w_j^s(a,b;b)w_{i+1-j}(b,b) \\
			& = w_{i+1}^s(a,b;b),
		\end{align*}
		as desired, completing the inductive proof. 
		
		This shows that for all $i\geq 0$ we have $w_i^s(x,y;y) = w_i^s(a,b;b)$. 
		Applying the same argument to $z$, $y$, $c$, and $b$, we obtain that for all $i\geq 0$, we have
		$$
		w_{i}^s(y,z;y) = w_{i}^s(z,y;y) = w_{i}^s(c,b;b) = w_{i}^s(b,c;b).  
		$$

		Finally, let $i\geq 0$. 
		By~\Cref{lem:walk_formula}, we have
		\begin{align*}
			w_i(x,z;y) &= \sum_{j+k+l=i} w_j^s(x,y;y)w_k(y,y)w_l^s(y,z;y)\\
			&= \sum_{j+k+l=i} w_j^s(a,b;b)w_k(b,b)w_l^s(b,c;b)\\
			&= w_i(a,c;b).
		\end{align*}
		This concludes the proof.
	\end{proof}

	\subsection{2-connectedness}
	
	We start by a reformulation of a lemma due to Fulton (see~\cite[Section 3.2]{FultonThesis}) whose proof we include for the sake of completeness.
	
	\begin{lemma}\label{lem:Fulton}
		Let $U$ be a quantum isomorphism from $G$ to $H$. 
		Let $x$, $y\in V(G)$ and $a$, $b\in V(H)$. 
		If $u_{ax}u_{by}\neq 0$, then for all $i\geq 0$ we have $w_i(x,y) = w_i(a,b)$. 
		In particular, we have $d_G(x,y) = d_H(a,b)$.
	\end{lemma}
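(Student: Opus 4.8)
The plan is to promote the intertwining relation to all powers of the adjacency matrices and then extract scalar information by sandwiching with the projections $u_{ax}$ and $u_{by}$. Write $A = \Adj(G)$ and $B = \Adj(H)$. Since $U$ is a magic unitary it is an honest unitary, so $UA = BU$ rewrites as $B = UAU^*$, and hence $B^i = UA^iU^*$ for every $i \geq 0$. Comparing the $(a,b)$-entries of both sides, using $[A^i]_{x'y'} = w_i(x',y')$, $[B^i]_{ab} = w_i(a,b)$, and $[U^*]_{y'b} = u_{by'}^* = u_{by'}$, one obtains the identity
\[ w_i(a,b)\cdot 1 \;=\; \sum_{x',y' \in V(G)} w_i(x',y')\, u_{ax'}u_{by'} \]
inside the underlying $C^*$-algebra.

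The second step is to multiply this identity on the left by $u_{ax}$ and on the right by $u_{by}$. Because the entries in a fixed row of a magic unitary are pairwise orthogonal projections, $u_{ax}u_{ax'} = \delta_{xx'}\,u_{ax}$ and $u_{by'}u_{by} = \delta_{y'y}\,u_{by}$; this kills every summand with $(x',y') \neq (x,y)$ and collapses the double sum to $w_i(x,y)\, u_{ax}u_{by}$. We therefore get
\[ \bigl(w_i(x,y) - w_i(a,b)\bigr)\, u_{ax}u_{by} = 0, \]
and since $u_{ax}u_{by} \neq 0$ by hypothesis, the scalar coefficient must vanish, i.e. $w_i(x,y) = w_i(a,b)$. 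For the final assertion, note that $d(x,y)$ is precisely the least $i$ with $w_i(x,y) > 0$ (and $+\infty$ when no such $i$ exists, which corresponds to $w_i(x,y) = 0$ for all $i$); as the two walk-counting sequences coincide term by term, their first nonzero indices agree, giving $d(x,y) = d(a,b)$.

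There is no serious obstacle here; the only point requiring care is non-commutativity. The projections $u_{ax}$ and $u_{by}$ need not commute with the summands $u_{ax'}u_{by'}$, so one cannot simply factor the identity — one must multiply on both sides, and apply the row-orthogonality relations in the correct order, $u_{ax}u_{ax'}$ on the left and $u_{by'}u_{by}$ on the right.
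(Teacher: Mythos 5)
Your proof is correct and follows essentially the same route as the paper's: both promote $UA=BU$ to $UA^i=B^iU$ (you phrase it as $B^i=UA^iU^*$ and read off the $(a,b)$-entry, the paper reads off the $(a,y)$-entry of $UA^i=B^iU$), then sandwich with $u_{ax}$ on the left and $u_{by}$ on the right and use orthogonality of the entries of a magic unitary to collapse the sum. The distance claim is likewise handled the same way, via the characterisation of $d$ as the first index with a nonzero walk count.
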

	
	\begin{proof}
		Let $A = \Adj(G)$ and $B = \Adj(H)$. 
		Let $i\geq 0$. 
		We have $UA^i = B^iU$, so 
		$$\sum_{z\in V(G)} u_{az} [A^i]_{zy} = [UA^i]_{ay} = [B^iU]_{ay} = \sum_{c\in V(H)} [B^i]_{ac} u_{cy}.$$ 
		Multiplying on the left by $u_{ax}$ and on the right by $u_{by}$ and using the orthogonality of the rows and columns of $U$, we obtain $$u_{ax}u_{by} [A^i]_{xy} = [B^i]_{ab} u_{ax}u_{by}.$$ 
		Since $u_{ax}u_{by}\neq 0$, we have that $[A^i]_{xy} = [B^i]_{ab}$. 
		Recalling that $w_i(x,y) = [A^i]_{xy}$ and that $w_i(a,b) = [B^i]_{ab}$, we obtain the desired result.
		
		Let us prove the last assertion by contrapositive. 
		Up to symmetry, let us assume that $d(a,b) > d(x,y) = k\in \N$. 
		Hence, we have $w_k(x,y) > 0 = w_k(a,b)$, so by what precedes we have $u_{ax}u_{yb}=0$, as desired. 
		This concludes the proof.
	\end{proof}
	
	The following theorem relates the coefficients of a quantum isomorphism from a graph $G$ to a graph $H$ to the blocks of $G$ and $H$. 
	
	\begin{theorem}\label{lem:qi_blocks}
		Let $G$ and $H$ be two quantum isomorphic graphs. Let $U$ be a quantum isomorphism from $G$ to $H$. 
		Let $x, y \in V(G)$ and $a, b \in V(H)$. 
		If $x$ and $y$ are in a common block but not $a$ and $b$, or if $a$ and $b$ are in a common block but not $x$ and $y$, then $u_{ax}u_{by} = 0.$ 
		
		Moreover, if $x$ is a cut vertex and $a$ is not, or $a$ is a cut vertex and $x$ is not, then $u_{ax} = 0$. 
	\end{theorem}
	
	\begin{proof}
		Let $a$, $b\in V(H)$ not be in a common block, and take $x$, $y\in V(G)$ such that $u_{ax}u_{by} \neq 0$. 
		We want to show that $x$ and $y$ are not in a common block neither.

		If $a$ and $b$ are not in the same connected component, then by~\Cref{lem:Fulton}, $x$ and $y$ are not in a common connected component neither, in particular, they are not in a common block of $G$. Thus, from now on, we may assume that $a$ and $b$ are in a common connected component. 
		Hence, there is a cut vertex $c \in V(H)$ distinct from $a$ and $b$ such that every walk from $a$ to $b$ passes through~$c$. 
		In other words, we have $w_i(a,b;c) = w_i(a,b)$ for all $i\geq 0$. 
		Now, we have
		\[ 0 \neq u_{ax}1u_{by} = u_{ax}\left(\sum_{z\in V(G)}u_{cz}\right)u_{by} = \sum_{z\in V(G)} u_{ax}u_{cz}u_{by},\]
		hence there exists $z\in V(G)$ such that $u_{ax}u_{cz}u_{by}\neq 0$. 
		In particular, we have $u_{ax}u_{cz}\neq 0$, $u_{cz}\neq 0$, and $u_{cz}u_{by}\neq 0$, so $z$ is different from $x$ and $y$. Moreover, by~\Cref{lem:Fulton} for all $i\geq 0$ we have $w_i(x,z) = w_i(a,c)$, $w_i(z,z) = w_i(c,c)$, and $w_i(z,y) = w_i(c,b)$. 
		Hence, by~\Cref{lem:nb_walks}, we have $w_i(x,y;z) = w_i(a,b;c) = w_i(a,b)$ since $c$ is a cut vertex separating $a$ and $b$. 
		Since by assumption $u_{ax}u_{by}\neq 0$, by~\Cref{lem:Fulton} again we have that $w_i(x,y) = w_i(a,b)$, so we have shown that $w_i(x,y) = w_i(x,y;z)$. 
		This implies that every walk from $x$ to $y$ passes through $z$. Therefore, $x$ and $y$ are in distinct connected components of $G\setminus z$, that is, $z$ is a cut vertex separating $x$ and $y$. Hence, $x$ and $y$ are not in the same block of $G$.
		
		Applying what precedes to $U^*$ and $H$ and $G$, we obtain that if $u_{ax}u_{by} \neq 0$ and $x$ and $y$ are not in a common block, then neither are $a$ and $b$. 
		This concludes the proof of the first part of the lemma.
		
		Now let $x$ be a cut vertex in $G$ and take $a\in V(H)$ such that $u_{ax}\neq 0$. First, if $G$ has one vertex, then $G = K_1=H$. In this case $a$ is a cut vertex of $H$ as well, and there is nothing to prove. So, from now on, we may assume that $G$ has at least two vertices.  
		By definition, there are $y$ and $z\in V(G)$ which are neighbors of $x$ but are not in a common block of $G$. 
		Now, we have:
		\begin{align*}
			0 &\neq u_{ax} = u_{ax}^2\\
			&= u_{ax} \left(\sum_{b\in V(H)} u_{by}\right)\left(\sum_{c\in V(H)} u_{cz}\right) u_{ax}\\
			&= \sum_{b,c\in V(H)} u_{ax}u_{by}u_{cz}u_{ax}.
		\end{align*}
		In particular, there exist $b$, $c\in V(H)$ such that $u_{ax}u_{by}u_{cz}u_{ax}\neq 0$. 
		This implies that $u_{ax}u_{by}\neq 0$ and that $u_{cz}u_{ax}\neq 0$, so by~\Cref{lem:Fulton} we have that $ab\in E(H)$ and $ca\in E(H)$. 
		Moreover, we have that $u_{by}u_{cz}\neq 0$, so by what precedes $b$ and $c$ are not in a common block of $H$, since $y$ and $z$ are not. 
		Hence, $b$ and $c$ are neighbors of $a$ which are not in a common block of $H$. 
		So $a$ is in the intersection of two distinct blocks, thus it is a cut vertex. 
		Applying what we proved to $U^*$, we obtain that if $a$ is a cut vertex and $u_{ax}\neq 0$, then $x$ is a cut vertex. 
		This concludes the proof.
	\end{proof}
	
	This allows us to show that 2-connectedness is preserved under quantum isomorphism.
	
	\begin{theorem}\label{coro:qi_2connected}
		If $G$ is 2-connected and quantum isomorphic to $H$, then $H$ is 2-connected.
	\end{theorem}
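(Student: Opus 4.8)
The plan is to argue by contradiction, using the ``moreover'' part of \Cref{lem:qi_blocks} (that the coefficient of a quantum isomorphism relating a cut vertex to a non-cut vertex must vanish) together with the column-sum condition of a magic unitary.

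First I would carry out some harmless reductions. Since $G$ and $H$ are quantum isomorphic they have the same number of vertices, so I may assume $\abs{V(G)} = \abs{V(H)} \geq 2$, the case of a single vertex being immediate. Since a $2$-connected graph is connected, \Cref{lem:connected_components_partition} shows that $H$ is connected as well. It therefore suffices to prove that $H$ has no cut vertex: a connected graph on at least two vertices with no cut vertex is $2$-connected directly from the definition, because deleting a non-cut vertex from a connected graph leaves a nonempty connected graph.

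For the core of the argument, note that since $G$ is $2$-connected it has no cut vertex; equivalently, by Item~(4) of \Cref{prop:properties_of_blocks}, every vertex of $G$ is an internal vertex of its unique block. Suppose for contradiction that $H$ has a cut vertex $a$, and let $U$ be a quantum isomorphism from $H$ to $G$ with coefficients in a unital $C^*$-algebra $X$ (such a $U$ exists, as the adjoint of a quantum isomorphism from $G$ to $H$). For every $x \in V(G)$, the vertex $x$ is not a cut vertex of $G$ whereas $a$ is a cut vertex of $H$, so the ``moreover'' part of \Cref{lem:qi_blocks} gives $u_{xa} = 0$. Summing over $x$ and using that the columns of the magic unitary $U$ sum to $1$, we obtain $1 = \sum_{x \in V(G)} u_{xa} = 0$ in $X$, a contradiction. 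Hence $H$ has no cut vertex, and by the previous paragraph $H$ is $2$-connected.

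I do not expect a genuine obstacle here, since \Cref{lem:qi_blocks} does all the real work; the only points demanding a little care are the degenerate cases (a single vertex, or a zero algebra) and unwinding the definition of $2$-connectedness so as to reduce the statement to ``$H$ is connected and has no cut vertex''.
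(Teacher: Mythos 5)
Your proof is correct and follows essentially the same route as the paper: both arguments rest entirely on the ``moreover'' part of \Cref{lem:qi_blocks} combined with the fact that rows/columns of a magic unitary sum to $1$. The paper phrases it as a contrapositive (a nonzero entry $u_{ax}$ in the column of the cut vertex $a$ forces some $x\in V(G)$ to be a cut vertex), while you derive $1=0$ from the vanishing of the whole column; these are trivially equivalent, and your extra care with the degenerate cases is fine but not a substantive difference.
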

	
	\begin{proof}
		Let us prove it by contrapositive. 
		Assume that $H$ is not 2-connected. If $H$ is not connected, then neither is $G$, therefore $G$ is not 2-connected. So, we may assume that $H$ is connected, and therefore there exists a cut vertex $a\in V(H)$. 
		Let $U$ be a quantum isomorphism from $G$ to $H$. 
		Since $1 = \sum_{x\in V(G)} u_{ax}$, there is $x\in V(G)$ such that $u_{ax} \neq 0$. 
		Hence $x$ is a cut vertex by~\Cref{lem:qi_blocks}, so $G$ is not 2-connected. 
		This concludes the proof.
	\end{proof}

	We conclude this section with an open problem arising naturally from~\Cref{coro:qi_2connected}. 
	
	A connected graph $ G $ is said to be \emph{$ k $-connected} if it has at least $ k $ vertices and for every $ S \subseteq V(G) $ of size at most $ k-1 $, the graph $ G \setminus S $ is connected. 
	The \emph{connectivity} of a connected graph $ G $ is the maximum $ k \in \N $ such that $ G $ is $ k $-connected.
	
	\begin{question}\label{q:quantum_iso_not_connectivity}
		Does there exist a pair of quantum isomorphic graphs $ G $ and $ H $ whose connectivities are not the same? 
		If so, what is the maximum $ k$ such that $ k$-connectivity is preserved under quantum isomorphism?
	\end{question}

	By~\Cref{coro:qi_2connected}, we have that $ k \geq 2 $.
	This question is also of particular interest in light of the notion of cospectrality.
	Two graphs are \emph{cospectral} if the spectra of their adjacency matrices are the same, and it is straightforward to see that quantum isomorphic graphs are cospectral. 
	The answer to~\Cref{q:quantum_iso_not_connectivity} would then be an indicator of where quantum isomorphism stands in the range between cospectrality and isomorphism of graphs. Indeed, should the answer be positive, it generalises the following theorem.
	
	\begin{theorem}\label{thm:cospectral_no_same_connectivity}
		There exist cospectral graphs with different connectivities.
	\end{theorem}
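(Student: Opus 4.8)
I would prove this by exhibiting an explicit pair, namely the classical smallest pair of cospectral graphs. Take $G = K_{1,4}$, the star on five vertices, and $H = C_4 \sqcup K_1$, the disjoint union of a four-cycle and an isolated vertex. To see that they are cospectral, note that writing the adjacency matrix of $K_{1,n}$ in block form shows its eigenvalues to be $\pm\sqrt{n}$ together with $0$ of multiplicity $n-1$; so $\Adj(K_{1,4})$ has spectrum $-2,0,0,0,2$. On the other hand the eigenvalues of $C_4$ are $2\cos(\pi k/2)$ for $k = 0,1,2,3$, that is $2,0,0,-2$, and adjoining an isolated vertex adds one further eigenvalue $0$; so $\Adj(C_4\sqcup K_1)$ also has spectrum $-2,0,0,0,2$, and hence $G$ and $H$ are cospectral.

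It then remains only to compare connectivities: the graph $H$ is disconnected, hence has connectivity $0$, while $G$ is a connected tree on more than one vertex, so it has a cut vertex and is not $2$-connected, hence has connectivity $1$. This proves the theorem. If one prefers that the notion of connectivity introduced above apply literally to each of the two graphs, i.e.\ that both be connected, one can instead use a classical pair of connected cospectral graphs on six vertices, one of them $2$-connected and the other having a cut vertex; the cospectrality check is again a short eigenvalue computation.

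I do not expect a genuine obstacle here: the whole content lies in choosing an example small enough that the spectral verification is immediate, and $K_{1,4}$ versus $C_4\sqcup K_1$ is as small as it gets. One could alternatively package the phenomenon more abstractly (for instance via a switching operation that preserves the spectrum while changing connectivity), but for a remark of this kind the explicit minimal example is the cleanest route.
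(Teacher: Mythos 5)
Your argument takes a genuinely different route from the paper's. The paper does not prove anything itself: it simply cites Haemers~\cite{Haemers2019}, Theorem~2.1, which for every $k\geq 2$ produces \emph{connected} $k$-regular cospectral pairs with connectivities $2k$ and $k+1$. Your pair $K_{1,4}$ versus $C_4\sqcup K_1$ is the classical Saltire pair, your spectral computation is correct, and as a self-contained proof of the literal statement this is cleaner than a citation. Two caveats. First, this paper defines $k$-connectivity and connectivity only for \emph{connected} graphs, so within its conventions $C_4\sqcup K_1$ does not have a connectivity; you are implicitly using the standard (but here undeclared) convention that a disconnected graph has connectivity $0$. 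Second, the theorem is offered as a classical shadow of \Cref{q:quantum_iso_not_connectivity}, and quantum isomorphic graphs are automatically simultaneously connected (\Cref{lem:connected_components_partition}), so the content that actually matters for that discussion is that connectivity is not spectrally determined even among connected graphs --- which your primary example does not show, and your proposed fallback (a connected cospectral pair on six vertices with different connectivities) is asserted but neither exhibited nor verified. It is worth noting that your example also demonstrates that the paper's closing remark that ``cospectrality preserves being connected'' is false as a general statement; it is true for regular graphs, where the multiplicity of the valency eigenvalue counts the connected components, and that is presumably the sense intended there.
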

	For examples of such pairs, see~\cite[Theorem 2.1]{Haemers2019} where for every $ r \geq 2 $ it is shown that there exist $ r $-regular cospectral graphs such that the connectivity of one is $2r$ and the connectivity of the other is $ r + 1 $.

	\section{Quantum isomorphism of partitioned graphs}\label{sec:partitioned_graphs}
	
	The aim of this section is to prove~\Cref{thm:partitioned_graphs}. This theorem generalises a standard technique, namely, that the sum on the rows and the columns of some submatrices of a quantum isomorphism are constant under good conditions. This allows for inducing local quantum isomorphisms between subgraphs, and typically happens when the submatrices are indexed by the connected components of the underlying graphs. 
	In this section, this technique is formalised and generalised to the setting of a general partition of the vertex set of a graph, provided that the quantum isomorphism satisfies a simple condition stated below. Immediate examples of such partitions, beyond the case of connected components, are the partition obtained from a fixed subset and the connected components obtained by removing it and (by~\Cref{lem:qi_blocks}) the partition into the cut vertices of the graph and the internal vertices of distinct blocks of it. 
		
	A \textit{partitioned graph} is a pair $(G,\Pp)$ where $G$ is a graph and $\Pp$ is a partition of $V(G)$. 
	We denote by $\sim_\Pp$ the equivalence relation naturally associated to $\Pp$ on $V(G)$. 
	
	Given two partitioned graphs $(G,\Pp_G)$ and $(H,\Pp_H)$, we say that a quantum isomorphism $U$ from $G$ to $H$ is a \textit{quantum isomorphism of partitioned graphs} from $(G,\Pp_G)$ to $(H,\Pp_H)$ if, for all $x,y\in V(G)$ and $a$, $b\in V(H)$, we have $u_{ax}u_{by} = 0$ if $x \sim_{\Pp_G} y$ and $a\not\sim_{\Pp_H} b$, or $x\not \sim_{\Pp_G} y$ and $a\sim_{\Pp_H} b$. 
	
	Note that this is a weaker condition than that of a quantum isomorphism of coloured graphs, where one asks the coefficients indexed by different colours to be zero. 
	
	Let $U$ be a such a quantum isomorphism of partitioned graphs as above with coefficients in a unital $C^*$-algebra $X$. 
	Let $C\in \Pp_G$ and $D\in \Pp_H$. 
	For $x\in V(G)$ and $a\in V(H)$, set $p_D(x) = \sum_{d \in D} u_{dx}$ and $q_C(a) = \sum_{w\in C} u_{aw}$. 
	
	\begin{lemma}\label{lem:projections}
		For $C \in \Pp_G $ and $ D \in \Pp_H$, the functions $p_D \colon V(G) \to X$ and $q_C\colon V(H)\to X$ are projection-valued and constant on the cells of $\Pp_G$ and $\Pp_H$ respectively. 
	\end{lemma}
	\begin{proof}
		First, using the fact that rows and columns of $U$ are orthogonal, it is easy to see that $p_D$ and $q_C$ are projection-valued. 
		
		Second, let $x$, $y\in V(G)$ be in a common cell of $\Pp_G$. 
		We have:
		\begin{align*}
			p_D(x)(1-p_D(y)) &= \left(\sum_{d\in D} u_{dx}\right) \left(\sum_{d\in V(H)} u_{dy} - \sum_{d\in D} u_{dy}\right)\\
			&= \sum_{d\in D,d'\notin D} u_{dx}u_{d'y}\\
			&= 0
		\end{align*}
		since $U$ preserves the partitions. 
		This shows that $p_D(x) = p_D(x)p_D(y)$. 
		Taking adjoints, one obtains $p_D(x) = p_D(y)p_D(x)$. 
		Applying what precedes to $y$ and $x$, we reach $p_D(y) = p_D(y)p_D(x) = p_D(x)$, as desired. 
		
		Finally, notice that $U^*$ is a quantum isomorphism from $(H,\Pp_H)$ to $(G, \Pp_G)$ preserving the partitions. 
		Adding the magic unitary used in index, we have that $q_{C,U} = p_{C,U^*}$, which is constant on the cells of $\Pp_H$ by what precedes. 
		This concludes the proof.
	\end{proof}
	
	We can now define $p_{DC}(U) = \sum_{d\in D} u_{dx}$ for some $x\in C$, and $ q_{CD}(U) = \sum_{w\in C} u_{aw}$ for some $a\in D$. 
	In particular, we have $q_{DC}(U^*) = p_{DC}(U)$ and $p_{CD}(U^*) = q_{CD}(U)$. 
	We will remove the mention of $U$ when the quantum isomorphism is clear from the context.
	
	\begin{lemma}\label{lem:p_q}
		For any $C\in \Pp_G$ and $D\in \Pp_H$, we have $q_{CD}(U) = p_{DC}(U)$.
	\end{lemma}
	
	\begin{proof}
		Let $x\in C$ and $a\in D$. 
		We have:
		\begin{align*}
			q_{CD}(U)(1-p_{DC}(U)) &= \left(\sum_{w\in C} u_{aw} \right) \left(\sum_{d\in V(H)} u_{dx} - \sum_{d\in D} u_{dx}\right)\\
			&= \sum_{w\in C,d\notin D} u_{aw}u_{dx}\\
			&= 0
		\end{align*}
		since $a \not \sim_{\Pp_H} d$, $x\sim_{\Pp_G} w$, and $U$ preserves the partitions. 
		Therefore, $q_{CD}(U) = q_{CD}(U)p_{DC}(U)$. 
		Applying what precedes to $q_{DC}(U^*)$ and $p_{CD}(U^*)$, we reach
		\[
		p_{DC}(U) = q_{DC}(U^*) = q_{DC}(U^*)p_{CD}(U^*)
		= p_{DC}(U)q_{CD}(U) = q_{CD}(U)^* = q_{CD}(U),
		\]
		as desired. 
		This concludes the proof.
	\end{proof}
	
	If $S\subseteq V(G)$ and $T\subseteq V(H)$, we denote by $U[T,S]$ the submatrix of $U$ indexed by $T\times S \subseteq V(H) \times V(G)$. 
	We now reach the fundamental theorem for partitioned graphs.
	
	\begin{theorem}\label{thm:partitioned_graphs}
		Let $(G,\Pp_G)$ and $(H,\Pp_H)$ be two partitioned graphs and denote by $A_G$ and $A_H$ the adjacency matrices of $G$ and $H$ respectively. 
		Let $U$ be a quantum isomorphism of partitioned graphs from $(G,\Pp_G)$ to $(H,\Pp_H)$ with coefficients in a unital $C^*$-algebra $X$. 
		Take $C\in \Pp_G$ and $D\in \Pp_H$, let $W = U[D,C]$, and $p = p_{DC}(U)$. 
		Then:
		\begin{enumerate}
			\item the matrix $P(U) = \left(p_{KL}(U)\right)_{K\in \Pp_H,L\in \Pp_G}$ is a magic unitary with coefficients in $X$,
			\item if $p\neq 0$, then it is the unit of the $C^*$-subalgebra $Y$ of $X$ generated by the coefficients of $W$,
			\item if $p = 0$, then $W = 0$,
			\item if $p \neq 0$, then $W$ is a magic unitary with coefficients in $Y$,
			\item $A_H[D,D]W = WA_G[C,C]$.
		\end{enumerate}
		In particular, there is a bijection $\varphi \colon \Pp_G \to \Pp_H$ such that $U[\varphi(S),S]$ is a quantum isomorphism from $G[S]$ to $H[\varphi(S)]$ for every $S\in \Pp_G$.
	\end{theorem}
	\begin{proof}
		Let us start with item (1). 
		By~\Cref{lem:projections}, we have that $P$ is a well-defined matrix of projections. 
		Let us check that $P=P(U)$ sums to $1$ on columns. 
		For $L\in \Pp_G$, taking $x\in L$, we have
		\[\sum_{K\in \Pp_H} p_{KL}(U) = \sum_{K\in \Pp_H} \sum_{a\in K} u_{ax} = \sum_{a\in V(H)} u_{ax} = 1,\]
		so the sum on the columns of $P$ is 1. Let us check for the rows. 
		For every $K\in \Pp_H$ and every $L \in \Pp_G$, using~\Cref{lem:p_q}, we have $p_{KL}(U) = q_{LK}(U) = p_{LK}(U^*)$. Now, for $K \in \Pp_H$, we have
		\[\sum_{L\in \Pp_G} p_{KL}(U) = \sum_{L\in \Pp_G} p_{LK}(U^*) = 1,\]
		by the previous computation applied to $U^*$. 
		Hence $P$ is a magic unitary, which proves (1).
		
		Items (2) and (3) follow from the orthogonality of rows and columns of magic unitaries, from the construction of $p_{DC}$, and from~\Cref{lem:p_q}. 
		
		Item (4) follows immediately from~\Cref{lem:p_q} and item (2).
		
		Let us prove item (5). 
		By assumption, we have $UA_G = A_HU$, so computing by blocks, we have
		$$\sum_{L\in \Pp_G} U[D,L]A_G[L,C] =[U A_G]_{DC} = [A_H U]_{DC} = \sum_{K\in \Pp_H} A_H[D,K]U[K,C].$$ 
		Since $P$ has orthogonal rows and columns, we have that $pU[D,L] = 0$ when $L\neq C$, and $U[K,C]p = 0$ when $K\neq D$. 
		Since $A_G$ and $A_H$ are scalar-valued, multiplying the previous equation on the left and on the right by $p$, we obtain 
		$$(pWp)A_G[C,C] = A_H[D,D](pWp).$$ 
		Notice that $pWp = W$ by items (2) and (3). 
		Hence, we have $ A_H[D,D]W = WA_G[C,C]$, as desired.
		
		Finally, let us prove the last statement of the theorem. 
		Applying Lemma 2.20 from~\cite{Meunier2023} to the magic unitary $P$ we have a bijection $\varphi \colon \Pp_G \to \Pp_H$ such that $p_{\varphi(S)S} \neq 0$ for every $S\in \Pp_G$. 
		Let $S\in \Pp_G$, set $T = \varphi(S)$ and $W' = U[T,S]$. 
		By item (4), $W'$ is a magic unitary, and by item (5) we have $A_H[T,T]W' = W'A_G[S,S]$. 
		But $A_G[S,S] = \Adj(G[S])$ and $A_H[T,T] = \Adj(H[T])$. 
		Hence $W'$ is a quantum isomorphism from $G[S]$ to $H[T]$ with coefficients in $Y'$ where $Y'$ is the unital $C^*$-algebra generated by the coefficients of $W'$. 
		This concludes the proof.
	\end{proof}
	
	Applying~\Cref{thm:partitioned_graphs} to the partitions of the connected components of the two graphs allows one to find back the fact that quantum isomorphic graphs have two-by-two quantum isomorphic connected components, a result initially obtained by the second author in~\cite[Theorem 3.7]{Meunier2023}. We restate this result in our setting.  
	
	\begin{theorem}\label{lem:connected_components_partition}
		Let $U$ be a quantum isomorphism from $G$ to $H$. 
		Let $\Pp_G$ be the partition of $V(G)$ into the vertex sets of the connected components of $G$ and define $\Pp_H$ similarly. 
		Then $U$ is a quantum isomorphism of partitioned graphs from $(G,\Pp_G)$ to $(H,\Pp_H)$. 
		In particular, the connected components of $G$ and $H$ are in bijection and are two-by-two quantum isomorphic.
	\end{theorem}
	
	\begin{proof}
		The last assertion follows immediately from~\Cref{thm:partitioned_graphs} once $U$ is shown to be a quantum isomorphism of partitioned graphs. 
		For this, let $x$, $y\in V(G)$ and let $a$, $b\in V(H)$. 
		Assume that $x$ and $y$ are in a common connected component of $G$ and that $a$ and $b$ are not in a common connected component of $H$. 
		Then $d_G(x,y) < +\infty = d_H(a,b)$, so by~\Cref{lem:Fulton} we have that $u_{ax}u_{by} = 0$. 
		Similarly, $u_{ax}u_{by} = 0$ if $d_G(x,y) = +\infty > d_H(a,b)$. 
		This concludes the proof.
	\end{proof}

	\section{Operations on anchored graphs}\label{sec:anchored_graphs}
	
	In this section, we build our main technical operation, namely the operation $\Gamma$, which allows us to decompose a connected graph into a disjoint union of graphs with strictly less blocks. The main result of this section is~\Cref{lem:qi_for_anchored_graphs} which states that $\Gamma$ preserves quantum isomorphisms. This enables us to apply induction to obtain the main results of this article in the following sections.

	\subsection{Anchored graphs}
	In this subsection, we introduce \emph{anchored graphs} which provide the suitable formalism for the operation $ \Gamma $.

	\begin{definition}\label{def:anchored_graph}
		A \emph{connected anchored graph} is a pair $ (G, R) $ where $ G $ is a connected graph and $ R \subseteq V(G) $ either consists of a cut vertex of $ G $ or a block of $ G $. 
		An \emph{anchored graph} is a pair $(G,R) $ where $ G $ is a graph, $ R \subseteq V(G) $, and for every connected component $ G_i $ of $ G $, the pair $ (G_i, R \cap V(G_i)) $ is a connected anchored graph.
	\end{definition}
	
	Let $ (G, R) $ and $ (H,S) $ be anchored graphs. 
	An \emph{isomorphism of anchored graphs} from $ (G, R) $ to $ (H,S) $ is a graph isomorphism $\phi : V(G) \to V(H)$ from $ G $ to $ H $ that preserves the anchors, that is $ \phi(R) = S $. 
	A \emph{quantum isomorphism of anchored graphs} from $ (G, R) $ to $ (H,S) $ is a quantum isomorphism $ U $ from $G$ to $H$ which preserves the anchors, that is $ u_{ax} = 0 $ if $ x \in R $ and $ a \notin S $ or if $ x \notin R $ and $ a \in S $. 
	
	The following lemma can be seen as an anchored version of~\Cref{lem:connected_components_partition}.

	\begin{lemma}\label{lem:qi_for_disconnected_anchored_graphs}
		Let $(G,R)$ and $(H,S)$ be anchored graphs and let $U$ be a quantum isomorphism of anchored graphs from $(G,R)$ to $(H, S)$ with coefficients in a unital $C^*$-algebra $X$.
		Let $G_1, \dots, G_k$ be the connected components of $G$ and $H_1, \dots, H_l$ be the connected components of $H$. 
		Let $R_i = R \cap V(G_i)$ for $1\leq i\leq k$ and $S_j = S \cap V(H_j)$ for $1\leq j\leq l$. 
		Then $l = k$ and up to reordering, for every $1\leq i\leq k$, we have that $ W_i = U[V(H_i), V(G_i)] $ is a quantum isomorphism of anchored graphs from $(G_i,R_i)$ to $(H_i,S_i)$ with coefficients in the $C^*$-subalgebra of $X$ generated by the coefficients of $W_i$. 
	\end{lemma}
	
	\begin{proof}
		Let $\Pp_G$ be the partition of $V(G)$ into the sets of vertices of the connected components of $G$, and let $\Pp_H$ be similarly defined for $H$. 
		By~\Cref{lem:connected_components_partition}, $U$ is a quantum isomorphism of partitioned graphs from $(G,\Pp_G)$ to $(H,\Pp_H)$, so, by~\Cref{thm:partitioned_graphs}, we have that $k=l$ and that up to reordering, for every $ 1 \leq i \leq k $, we have that $W_i = U[V(H_i),V(G_i)]$ is a quantum isomorphism from $G_i$ to $H_i$. Let $ i \in \{1, \dots, k\}$, we claim that $W_i$ is a quantum isomorphism of anchored graphs from $(G_i,R_i)$ to $(H_i,S_i)$. 
		Indeed, let $x\in V(G_i)$ and $a\in V(H_i)$ such that $w_{ax} \neq 0$. 
		If $x\in R_i$, then $x\in R$ so since $U$ is a quantum isomorphism of anchored graphs and $u_{ax} = w_{ax} \neq 0$ we have $a\in R$. 
		Since $a\in V(H_i)$, we have $a\in S_i$, as desired. 
		Similarly, we obtain that if $a\in S_i$, then $x\in R_i$. 
		Hence $W_i$ is a quantum isomorphism of anchored graphs from $(G_i,R_i)$ to $(H_i,S_i)$, which concludes the proof.
	\end{proof}
	
	\subsection{Relations between blocks and cut vertices of a graph and its block graph} \label{subsec:lambda}
	
	The aim of this subsection is to define a function $\lambda_{G} $ from the set of blocks and cut vertices of $G$ to the set of blocks and cut vertices of the block graph of $G$ which will allow us to define the block structures of an anchored graph. To do so, we need some lemmas. 
	
	\begin{lemma}\label{lemforlambda1}
		Let $ G$ be a connected graph. 
		For every block $B$ of $\Bb(G)$, there exists a unique cut vertex $v\in V(G)$ such that $\{v\} = \bigcap_{b \in B} b$. Conversely, for every cut vertex $v$ of $G$, the set of all blocks $b$ of $G$ such that $ v \in b $ forms a block of $\Bb(G)$. 
	\end{lemma}
	\begin{proof} 
		First, since $B$ is a block of $\Bb(G)$, by definition, it has at least two vertices. Next, by~\Cref{prop:characterisation_blockgraph}, every two distinct $b,b' \in B$ have nonempty intersection as blocks of~$G$. Moreover, by item (1)~\Cref{prop:properties_of_blocks}, this intersection contains exactly one vertex. Therefore, $ \bigcap B = \bigcap_{b \in B} b $ has at most one element. Also, if $\bigcap B \neq \varnothing $, then the vertex $v \in V(G)$ in this intersection is contained in at least two blocks of $G$, thus it is not an internal vertex. Therefore, by item (4) of~\Cref{prop:properties_of_blocks}, it is a cut vertex of $G$.
		
		Hence, it is enough to show that $\bigcap B \neq \varnothing $.
		If $ \abs{B} = 2 $, then the result is immediate since the two elements of $B$ have exactly one vertex in common. So, for the rest of the proof, we assume that~$ \abs{B} \geq 3$. 
		
		Let $b_1, b_2, b_3 \in B$ be blocks of $G$. Set $\{v_i\} = b_i \cap b_{i+1}$ for $i \in \{1,2,3\}$ (where indices are modulo~3). 
		
		First, if for some $i$, we have $v_i = v_{i+1}$, then by choice of $v_j$'s, we have $ v_i \in b_i $ and $v_i = v_{i+1} \in b_{i+2}$. Since $ v_i \in b_{i+2} \cap b_i = \{v_{i+2}\}$, we have $v_i = v_{i+2} $, which implies $v_i = v_{i+1} = v_{i+2}$.
		
		Second, if $v_i$'s are all distinct, then find a path $P_i$ of $G$ entirely in $b_i$ from $v_{i-1}$ to $v_i$, which is possible because $b_i$ is connected. It is straightforward to see that $ P_2 P_3 P_1 $ (obtained by concatenation of the three paths and then removing the repetitions of $v_i$'s) is a cycle of $G$. This is a contradiction with item (3) of~\Cref{prop:properties_of_blocks}. 
		
		Therefore, there exists a common vertex $v$ such that for all distinct $b, b'\in B$, we have $b \cap b' = \{v\}$, hence $\bigcap B = \{v\}$, as desired.
		
		Conversely, for a cut vertex $v$ of $G$, let $ B = \{b \in \BB(G)  \mid v \in b\}$. We show that $B$ is a block of $\Bb(G)$. Note that $\abs{B} \geq 2$ and it induces a complete graph in $ \Bb(G) $. So, $B$ is inside a unique block $ B'$ of $\Bb(G)$. Let $\{v'\} = \cap_{b \in B'}{b}$, which by the first part of the lemma is a cut vertex of $G$. If $v' \neq v$, then every $b \in B$ contains both $v$ and $v'$, and since $\abs{B} \geq 2$, we have at least two blocks that intersect in more than one vertex, a contradiction with item (1) of~\Cref{prop:properties_of_blocks}. Therefore $v'=v$ and consequently by definition of $B$, we have $B'=B$, which implies that $B$ is a block of~$\Bb(G)$, as desired.
	\end{proof}

	\begin{lemma} \label{lemforlambda2}
		Let $G$ be a connected graph. Every block $b \in \BB(G) $ of $G$ containing at least two cut vertices of $G$ is a cut vertex in $\Bb(G)$. Conversely, every non-isolated cut vertex $b$ of $\Bb(G)$ is a block of $G$ that contains at least two cut vertices of $G$.
	\end{lemma}
	\begin{proof}
		Let  $b \in \BB(G) $ and let $v_1, v_2 \in b $ be two distinct cut vertices of $G$. Let $ b_1$ and $b_2$ be distinct blocks of $G$ such that $b_i \cap b = \{v_i\}$. An elementary argument such as the one in the proof of~\Cref{lemforlambda1} shows that $b_1 \cap b_2 = \varnothing $. Therefore there is no edge between $b_1$ and $b_2$ in $\Bb(G)$. Thus, there exist distinct blocks $B_1$ and $B_2$ of $\Bb(G)$ such that $b, b_1 \in B_1$ and $b, b_2 \in B_2$. Therefore, $b$ is a cut vertex of $\Bb(G)$ whose removal separates $b_1$ and $b_2$ in $ \Bb(G) $. 
		
		Conversely, let $ b $ be a cut vertex of $\Bb(G)$. Let $ B_1 $ and $B_2$ be two distinct blocks of $\Bb(G)$ containing $b$. Let $ \{v_1\} = \bigcap B_1$ and $ \{v_2\}=\bigcap B_2$. By~\Cref{lemforlambda1}, $v_1$ and $v_2$ are distinct cut vertices of $G$.
	\end{proof}
	
	\begin{lemma} \label{lemforlambda3}
		Let $G$ be a connected graph. Every block $b \in \BB(G)$ of $G$ containing exactly one cut vertex of $G$ is an internal vertex of $\Bb(G)$. Conversely, every internal vertex $b$ of $\Bb(G)$, as a block of $G$, contains exactly one cut vertex of $G$. 
	\end{lemma}
	\begin{proof}
		Let $ b\in \BB(G)$ be a block of $G$ containing exactly one cut vertex of $G$. Since $G$ is not 2-connected, $b$ is not an isolated vertex of $ \Bb(G) $. Moreover, by~\Cref{lemforlambda2}, we have that $b$ is not a cut vertex of $\Bb(G)$. Hence, by item (4) of~\Cref{prop:properties_of_blocks}, we have that $b$ is an internal vertex of~$ \Bb(G) $. 
		
		Conversely, if $b$ is an internal vertex of $\Bb(G)$, then let $B$ be the unique block of $\Bb(G)$ containing~$b$. By~\Cref{lemforlambda1}, setting $ \{v\} = \bigcap B $, we have that $v$ is a cut vertex of $G$ that is contained in~$b$. Moreover, by~\Cref{lemforlambda2}, we have that $b$ contains at most one cut vertex of $G$. This completes the proof.
	\end{proof}
	
	Let us now define the function $\lambda$. For a graph $F$, let $ \CC_1(F) = \{ \{v\} \mid v \in \CC(F) \}$, that is, $\CC_1(F)$ is the set of the singletons of all cut vertices of $F$. Set $ \AA(F) = \CC_1(F)  \cup \BB(F) $. Note that $ \AA(F) $ is the set of all possible anchors for $F$. 
	
	Now we can define a function $\lambda = \lambda_{G} : \AA(G) \to \AA(\Bb(G))$ as follows. 
	\begin{itemize}
		\item If $ b=\{r\} \in \CC_1(G)$, then define $\lambda(b) = \{b' \in V(\Bb(G)) \mid r \in b' \}$, which is a block of $\Bb(G)$ by~\Cref{lemforlambda1}.
		
		\item If $b \in \BB(G)$ and $b$ contains at least two cut vertices of $G$, then define $\lambda(b) = \{b\}$ which is in $\CC_1(\Bb(G))$ by~\Cref{lemforlambda2}.
		
		\item If $b \in \BB(G)$ and $b$ contains exactly one cut vertex of $G$, then define $\lambda(b)$ to be the unique block of $ \Bb(G) $ containing $b$, which exists since, by~\Cref{lemforlambda3}, $b$ is an internal vertex of $\Bb(G)$. 
		
		\item If $b \in \BB(G)$ and $b$ contains no cut vertices of $G$, then $G$ is 2-connected and $\Bb(G) = K_1$. In this case, define $\lambda(b) = V(\Bb(G))$. 
	\end{itemize}
	
	It is immediate from~\Cref{lemforlambda1,lemforlambda2,lemforlambda3} that the function $\lambda_G$ is well-defined. As a result, if $(G,R)$ is an anchored graph, then so is $(\Bb(G), \lambda_G(R))$.

	\subsection{Block structures of anchored graphs}
	
	A \emph{connected rooted graph} is a pair $ (F, r) $, where  $ F $ is a connected graph and $ r \in V(F) $. 
	Since all the graphs that we will consider as rooted graphs are connected, we refer to them simply as rooted graphs. 
	An \emph{isomorphism of rooted graphs} $\alpha$ from a rooted graph $ (F, r) $ to a rooted graph $ (F', r') $ is a graph isomorphism from $ F $ to $ F' $ such that $\alpha(r) = r'$.

	Let $ (G, R) $ be a connected anchored graph.
	We define the \emph{block graph} of $(G,R)$ to be the anchored graph $\Bb(G,R) = (\Bb(G), \lambda_G(R))$. This is well-defined by~\Cref{subsec:lambda}. 
	
	We define its \emph{block tree} $\Tt(G,R)$ to be the following rooted tree. If $R$ is a block of $G$, we set $ \Tt(G,R) = (\Tt(G),R) $. If $R=\{r\}$ where $r$ is a cut vertex of $G$, we set $ \Tt(G,R) = (\Tt(G),r) $. However, for simplicity and by abuse of notation, we may also denote the latter by $ (\Tt(G),R) $. 
	
	We recall that, as explained in~\Cref{sec:preliminaries}, there is a natural proper 2-colouring of $V(\Tt(G))$ into blocks and cut vertices of $G$, and we will refer to blocks as white vertices and cut vertices as black vertices of $\Tt(G)$. Recall that a leaf of a tree is a vertex of degree 1. From the definition, it is straightforward that the leaves of a block tree are white. 
	
	\begin{lemma}\label{lem:iso_coloured_trees}
		Let $(G,R)$ and $(H,S)$ be connected anchored graphs with the same number of vertices, and let $\alpha \colon \Tt(G,R) \to \Tt(H,S)$ be an isomorphism of rooted trees. Then, $\alpha$ preserves the colours. In particular, $R$ is a block of $G$ if and only if $S $ is a block of $H$.
	\end{lemma}
	
	\begin{proof}
		Note that in every connected 2-coloured bipartite graph, if the colour of one vertex $v$ is given, the colour of every other vertex is determined by the parity of its distance to $v$.
		
		First, assume that $\Tt(G)$ has at least one leaf $ l $. Hence, $\alpha(l)$ is a leaf in $\Tt(H)$. Now, by what precedes, the colour of each vertex in $\Tt(G)$ and in $\Tt(H)$ is determined by its distance to $l$ and $\alpha(l)$ respectively. Since both $l$ and $\alpha(l)$ are white and since $\alpha$ preserves distances, we have that $\alpha$ preserves the colours. In particular, since $\alpha$ maps the root of $\Tt(G)$ to the root of $\Tt(H)$, we have that the roots are of the same colour, which completes the proof. 
		
		Second, assume that $\Tt(G)$ has no leaves. So, $\Tt(G) = K_1 = \Tt(H)$. If the only vertex of $ \Tt(G) $ is black, then $G = K_1$. Since $G$ and $H$ have the same number of vertices, we have that $H=K_1$ as well. Hence, the only vertex of $ \Tt(H) $ is black, as desired. Else, if the only vertex of $\Tt(G)$ is white, then $G$ is a 2-connected graph, so in particular it has strictly more than one vertex. Therefore, $H$ is not $K_1$ and thus the only vertex of $\Tt(H)$ is white. This concludes the proof.
	\end{proof}

	\subsection{Operation $\Gamma$}
	
	Given a connected anchored graph $ (G, R) $, we want to define $\Gamma(G,R) = (G', R')$, another anchored graph, where the number of blocks of each connected component of $ G' $ is strictly less than that of $ G $ when $G$ has at least two blocks. 
	We start with some preliminary notions.

	Let $ G $ be a graph, we define the function $\rho = \rho_G : V(G) \to \PP(V(G)) $ that maps $ v $ to $ \{v\} $ if $ v $ is a cut vertex or an isolated vertex of $ G $ and that maps $v$ to the vertex set of the unique block containing $ v $ otherwise. 
	Note that we have $ v \in \rho(v) $.
	
	\begin{remark} \label{remark:rootedtoanchored}
		Let $G$ be a graph with connected components $G_1, G_2, \dots, G_k$ and let $R \subseteq V(G)$ be such that for all $i$, we have $\abs{R \cap G_i} = 1$, i.e.\ $(G_i, R \cap G_i)$ is a rooted graph. It is straightforward to see that $ \bigcup \rho_G(R) = \bigcup_{r\in R} \rho(r) $ intersects each connected component of $G$ either in a cut vertex or a block of that connected component. Thus, $(G, \bigcup \rho_G(R))$ is an anchored graph.
	\end{remark}

	Let $ G $ be a connected graph on at least two vertices and let $ r $ be a cut vertex of $ G $. 
	Let $C_1,\ldots,C_k$ be the vertex sets of the connected components of $G \setminus r$ (notice that $k\geq 2$). 
	For $1\leq i\leq k$, we set $G_i = G[C_i\cup\{r\}]$, that is, every $G_i$ is the graph induced on $C_i \cup \{r\}$. 
	Let $G' = \bigoplus_{i=1}^k G_i$ be the disjoint union of $G_i$'s.
	Notice that for $1\leq i\leq k$, $G_i$ is exactly one connected component of $G'$ and it contains a copy of $r$, that we denote by $r_i$. 
	We call $ G' $ the \emph{split} of $ G $ over $ r$ and we denote it by $ \text{split}(G,r) $, we refer to $ r_1, \dots, r_k $ as the copies of $ r$ in $ G'$. Finally, when $G=K_1$, we set $\text{split}(G,r) = G $. We use these notations throughout the rest of the paper when needed.

	Now we can define the operation $\Gamma$ on connected anchored graphs. 
	Recall that $G[X]$ denotes the subgraph of $G$ induced by $X \subseteq V(G)$. 
	\begin{definition}\label{def:Gamma}
		Let $(G,R)$ be a connected anchored graph. 
		We define
		$$ 
		\Gamma(G,R) = \begin{cases}
			\left(\text{split}(G,r), \bigcup_{i=1}^k\rho_{\text{split}(G,r)}(r_i)\right) & \text{ if } R = \{r\} \text{ consists of a cut vertex of } G \\
			\left( G \setminus E(R), \bigcup_{r\in R} \rho_{G \setminus E(R)}(r) \right) & \text{ otherwise.} 
		\end{cases}
		$$
	\end{definition}
	
	Let us detail this definition. Note that it follows from~\Cref{remark:rootedtoanchored} that $\Gamma(G,R)$ is an anchored graph. 
	
	Case 1: $R = \{ r \}$ where $ r $ is a cut vertex of $ G $. Using the notations before the definition, in this case, no $r_i$ is a cut vertex in the connected component $G_i$ of $\text{split}(G,r)$, so $\rho_{\text{split}(G,r)}(r_i)$ is a block $b_i$ of $G_i$ and $(G_i, b_i)$ is an anchored graph. In this case, we set $\Gamma(G,R)$ to be the disjoint union of these anchored graphs. See~\Cref{figGammaCase1}.

	\begin{figure}
		\centering
		\begin{tikzpicture}[scale=.65]	
			\begin{scope}
				\draw (0,-0.2) ellipse (.8cm and 2cm);
				\draw[rotate=45, shift={(1,-.4)}] (0,-0.2) ellipse (.8cm and 2cm);
				\draw[rotate=-45, shift={(-1,-.4)}] (0,-0.2) ellipse (.8cm and 2cm);
				\fill (0,1.3) circle (2pt) node[above] {};
				\fill (0,1.22)  node[above] {$r$};
				\draw[dashed] (0,1.3) circle (.2cm);
			\end{scope}
			
			\draw[->, thick] (3,0) -- (6,0) node[midway, above] {$\Gamma$};
			\node at (11,-3.5) {$\Gamma(G,\{r\})$};
			\node at (0,-3.5) {$(G,\{r\})$};
			
			\begin{scope}[shift={(8,0)}]		
				\begin{scope}[rotate=-25]
					\draw (0,0) ellipse (1cm and 2cm);
					\fill (0,1.6) circle (2pt) node[below left] {$r_1$};
					\draw[dashed] (0,0.8) ellipse (0.5cm and 1cm);
					\node at (0,-2.3) {$G_1$};
					\node at (0,-0.75) {$\rho(r_1)$};
				\end{scope}
				
				\begin{scope}[shift={(3,-.2)}, rotate=0]
					\draw (0,0) ellipse (1cm and 2cm);
					\fill (0,1.6) circle (2pt) node[below] {$r_2$};
					\draw[dashed] (0,0.8) ellipse (0.5cm and 1cm);
					\node at (0,-2.3) {$G_2$};
					\node at (0,-0.75) {$\rho(r_2)$};
				\end{scope}
				
				\begin{scope}[shift={(6,0)}, rotate=25]
					\draw (0,0) ellipse (1cm and 2cm);
					\fill (0,1.6) circle (2pt) node[below right] {$r_k$};
					\draw[dashed] (0,0.8) ellipse (0.5cm and 1cm);
					\node at (0,-2.3) {$G_k$};
					\node at (0,-0.75)  {$\rho(r_k)$};
				\end{scope}
			\end{scope}
		\end{tikzpicture}
		\caption{Operation $\Gamma$ when the anchor consists of a cut vertex. The anchors are denoted by dashed lines.} \label{figGammaCase1}
	\end{figure}
	
	Case 2: $ R $ is a block of $ G $. In this case, we define $ G' $ to be the graph obtained from $G$ by removing all edges in $R$, that is, $ G' = G \setminus \{ xy \in E(G) \mid x,y \in R \} $.  We also set $R' = \bigcup_{r\in R} \rho_{G \setminus E(R)}(r)  $ and define $\Gamma(G,R) = (G',R')$. See~\Cref{figGammaCase2}.

	\begin{figure}
		\centering
		\begin{tikzpicture}[scale=.9]	
			\begin{scope}
				\draw[dashed] (0,0) ellipse (.95cm and .8cm);
				\draw[rotate=20, shift={(1.7,-.3)}] (0,0) ellipse (1.2cm and .5cm);
				\draw[rotate=-20, shift={(1.7,.3)}] (0,0) ellipse (1.2cm and .5cm);			
				\draw[shift={(0,-1.4)}] (0,0) ellipse (.6cm and 1cm);
				
				\fill (.8,0) circle (2pt) node[right] {$a$};
				\fill (0,-.6) circle (2pt) node[below] {$d$};
				\fill (0,.6) circle (2pt) node[above] {$b$};
				\fill (-.8,0) circle (2pt) node[left] {$c$};
				\draw (.8,0) -- (0, -.6) -- (-.8,0) -- (.8,0) -- (0,.6) -- (-.8,0) ;
			\end{scope}
			
			\draw[->, thick] (3,0) -- (6.2,0) node[midway, above] {$\Gamma$};
			\node at (9,-3.5) {$\Gamma(G,R)$};
			\node at (0,-3.5) {$(G,R)$};
			
			\begin{scope}[shift={(8,0)}]	
				\begin{scope}[shift={(1.6,0)}]
					\draw[rotate=20, shift={(1.7,-.3)}] (0,0) ellipse (1.2cm and .5cm);
					\draw[rotate=-20, shift={(1.7,.3)}] (0,0) ellipse (1.2cm and .5cm);				
					\fill (.8,0) circle (2pt) node[left] {};
					\fill (.7,0) node[] {};
					\fill (.8,-.05) node[below left] {{$\rho(a)$}};
					\draw[dashed] (0.8,0) circle (.18cm);
				\end{scope}	
				\begin{scope}[shift={(.8,.6)}]
					\fill (0,.6) circle (2pt) node[above] {};
					\draw[dashed] (0, 0.6) circle (.18cm);
					\fill (0.1,.6) node[below] {{$\rho(b)$}};
				\end{scope}
				\begin{scope}[shift={(0,0)}]
					\fill (-.8,0) circle (2pt) node[above] {};
					\draw[dashed] (-.8, 0) circle (.18cm);
					\fill (-.8, -.1) node[below] {{$\rho(c)$}};
				\end{scope}
				\begin{scope}[shift={(.8,-.6)}]
					\fill (0,-.6) circle (2pt) node[above right] {};
					\draw[shift={(0,-1.4)}] (0,0) ellipse (.6cm and 1cm);
					\draw[dashed] (0, -.95) ellipse (.3cm and .5cm) ;
					\fill (0, -1.3) node[below] {{ $\rho(d)$}};
				\end{scope}
			\end{scope}
		\end{tikzpicture}
		\caption{Operation $\Gamma$ when the anchor is a block. The anchors are denoted by dashed lines.} \label{figGammaCase2} 
	\end{figure}

	Note that by construction, if $G \neq K_1$ is connected but not 2-connected, setting $ (G',R') = \Gamma(G,R) $, we have that every connected component of $ G' $ has strictly less blocks than~$G$.
	
	\begin{lemma}\label{lem:gamma_definition}
		Let $(G,R)$ be a connected anchored graph and assume that $R$ is a block of $G$. 
		Set $(G',R') = \Gamma(G,R)$. 
		Notice that $V(G) = V(G')$. 
		We have:
		\begin{enumerate}
			\item $ R \subseteq R' $,
			\item if $ v \in R' \setminus R $, then there exists $u \in R $ such that $ v $ and $ u $ are in a common block of $ G $, and the number of blocks of $ G $ that contain $ u $ is exactly 2, 
			\item if $ u \in R $ and the number of blocks of $ G $ that contain $ u $ is exactly 2, then for every $ v \in V(G)  $ such that $ u $ and $ v $ are in a common block of $ G $, we have $ v \in R' $. 
		\end{enumerate}
	\end{lemma}
	
	\begin{proof}
		Let $\rho = \rho_{G'}$. 
		The first item follows directly from the fact that for every $ v $, we have $v \in \rho(v)$. 
		
		If $ v \in R' $, then there exists $ u \in R $ such that $ v \in \rho(u) $. 
		Now, if $ v \in R' \setminus R $, then $ v \neq u $, and in particular we cannot have $ \rho(u) = \{u\} $. 
		That is, $ \rho(u) $ is the vertex set of the unique block $ B $ of $ G'$ containing $ u $. 
		Thus $ v \in B $ as well. 
		Moreover, $ \rho(u) \neq \{u\} $ implies that $ u$ is neither a cut vertex nor an isolated vertex of $G' $. 
		Firstly, since it is not a cut vertex of $ G' $, it is in at most 2 blocks of $ G $. Secondly, since it is not an isolated vertex of $ G' $, it is in at least two blocks of $ G $. 
		This completes the proof of (2). 
		
		Finally, let $ u $ be a vertex as described in the statement of (3). 
		Let $ B $ be the block of $ G $ other than $ R $ containing $ u $. 
		By removing the edges of $ R $, this vertex $ u $ is contained in exactly one block $ B $ of $ G' $. 
		Thus $ \rho(u) = V(B) $. 
		Now, let $ v $ be a vertex in a common block with $ u $. 
		If $ v \in R$, then the result follows from (1). 
		If $ v \in B $, then $ v\in \rho(u) \subseteq R' $ which completes the proof.
	\end{proof}

	Now let us consider two connected anchored graphs $(G,R)$ and $(H,S)$. Given a quantum isomorphism of anchored graphs $ U $ from $ (G, R) $ to $ (H,S) $, we build a quantum isomorphism of anchored graphs $ \Gamma(U) $ from $ \Gamma(G, R) $ to $ \Gamma(H, S) $. 
	We start with a preliminary lemma. 
	
	\begin{lemma} \label{lem:anchor_is_the_same}
		Let $U$ be a quantum isomorphism of anchored graphs from $(G,R)$ to $(H,S)$. 
		Then $U[S,R]$ is a quantum isomorphism from $G[R]$ to $H[S]$. 
		Moreover, $R$ is a block of $G$ if and only if $S$ is a block of $H$.
	\end{lemma}
	\begin{proof}
		By definition, $U = \Diag(U_1,U_2)$ is diagonal by block with $U_2 = U[S,R]$. 
		In particular, $U_2$ is a magic unitary, and since $U\Adj(G) = \Adj(H)U$, we have $U_2\Adj(G[R]) = U_2\Adj(G)[R,R] = \Adj(H)[S,S]U_2 = \Adj(H[S])U_2$, so $U_2$ is a quantum isomorphism from $G[R]$ to $H[S]$. 
		The second part of the statement follows from the fact that $U_2$ is a square matrix.  
		This concludes the proof.
	\end{proof}
	
	When $ R = \{ r \} $ consists of a cut vertex, by~\Cref{lem:anchor_is_the_same}, so does $S$. We write $S = \{s\}$. In this case, we have $ U = \Diag(U_0, 1) $ where $U_0$ is a quantum isomorphism from $G\setminus r$ to $H\setminus s$. Let $C_1, \dots, C_k$ be the vertex sets of the connected components of $G$ and let $D_1, \dots, D_k$ be the vertex sets of the connected components of $H$ (the number of connected components is the same by~\Cref{lem:connected_components_partition}). By~\Cref{lem:connected_components_partition,thm:partitioned_graphs}, we have a magic unitary $P(U_0) = (p_{ij})_{1\leq i,j \leq k}$ where, after fixing $w_j \in C_j$ for every $j \in \{1, \dots, k\}$, we have $p_{ij} = \sum_{a \in D_i} u_{aw_j}$.

	\begin{definition}
		Let $(G,R)$ and $(H,S)$ be connected anchored graphs and let $U$ be a quantum isomorphism of anchored graphs from $(G,R)$ to $(H,S)$.
		\begin{itemize}
			\item If $ R$ is a block of $G$, we define $\Gamma(U)= U$.
			\item Otherwise, letting $U=\Diag(U_0,1)$, we define $\Gamma(U)=\Diag(U_0, P(U_0))$. That is, after fixing $w_j \in C_j$ for every $j \in \{1, \dots, k\}$, we have
			$$ [\Gamma(U)]_{ax} = \begin{cases}
				u_{ax} & \text{ if } a \notin  \{s_1, \dots, s_k \} \text{ and } x \notin  \{r_1, \dots, r_k \}  \\
				\sum_{b \in D_i} u_{bw_j} & \text{ if } a= s_i, x = r_j \\
				0 & \text{ otherwise}.
			\end{cases} $$
		\end{itemize}
	\end{definition}
	
	It is straightforward to check that $\Gamma(U)$ is indeed a magic unitary. We now prove the main theorem of this section.

	\begin{theorem}\label{lem:qi_for_anchored_graphs}
		If $U$ is a quantum isomorphism from $(G,R)$ to $(H,S)$, then $\Gamma(U)$ is a quantum isomorphism from $\Gamma(G,R)$ to $\Gamma(H,S)$. 
	\end{theorem}
	
	\begin{proof}
		Set $ (G',R') = \Gamma(G,R) $ and $ (H', S') = \Gamma(H,S) $. 
		
		Let us start with the case where $R = \{r\}$ and $r$ is a cut vertex in $G$. 
		In this case, by~\Cref{lem:anchor_is_the_same}, we have that $S = \{s\}$, with $s$ a cut vertex in $H$. 
		Denote the copies of $ r $ in $ G' =\text{split}(G,r) $ by $ \{r_1, \dots, r_k\} $ and the copies of $ s $ in $ H'=\text{split}(H,s) $ by $ \{s_1, \dots, s_l\} $. 
		Let $ A $ and $ B $ denote the adjacency matrices of $ G $ and $ H $ respectively. We have 
		$$
		A = \begin{pmatrix}
			A_1 & 0 & \ldots & 0 &  &  ^t L_1\\
			0 & A_2 & \ddots & \vdots &  &  ^t L_2 \\
			\vdots & \ddots  & \ddots  & 0 &  &  \vdots \\
			0  & \ldots & 0 & A_k & & ^tL_k \\
			& & & & & \\
			L_1 & L_2 & \ldots & L_k & & 0 
		\end{pmatrix},
		$$
		where $ A_j $ is the adjacency matrix of $ C_j $ (the $j$th connected component of $ G\setminus R$) and the last row and the last column of $ A $ are indexed by $ r $. 
		Setting $A_0 = \Diag(A_1, \dots, A_k) \text{ and } L = \Diag(L_1, \dots, L_k)$, we have that the adjacency matrix of $ G' $ is
		$$
		A' = \Adj(G') = \begin{pmatrix}
			A_0 & {}^tL \\
			L & 0
		\end{pmatrix}.
		$$
		Similarly, we have
		$$
		B = \begin{pmatrix}
			B_1 & 0 & \ldots & 0 &  &  ^t K_1\\
			0 & B_2 & \ddots & \vdots &  &  ^t K_2 \\
			\vdots & \ddots  & \ddots  & 0 &  &  \vdots \\
			0  & \ldots & 0 & B_l & & ^tK_l \\
			& & & & & \\
			K_1 & K_2 & \ldots & K_l & & 0
		\end{pmatrix},
		$$
		where $ B_i $ is the adjacency matrix of $ D_i $ (the $i$th connected component of $ H\setminus S$) and the last row and the last column of $ B $ are indexed by $ s $. 
		Again, setting $B_0 = \Diag(B_1, \dots, B_l) \text{ and } K = \Diag(K_1, \dots, K_l)$, we get that the adjacency matrix of $ H' $ is
		$$
		B' = \Adj(H') = \begin{pmatrix}
			B_0 & {}^t K \\
			K & 0
		\end{pmatrix}.
		$$
		
		Recall that $U = \Diag(U_0, 1)$. Using the notations from the definition of $ \Gamma(U) $, we have $ V = \Gamma(U) = \Diag(U_0 , P(U_0)) $,
		where
		$$
		U_0 =  \begin{pmatrix}
			U_{11}  & \ldots & U_{1k}  \\
			\vdots  & \ddots  & \vdots \\
			U_{l1}   & \ldots  & U_{lk} 
		\end{pmatrix}.
		$$
		In particular, since $U_0$ is a magic unitary, we have $k=l$. 
		Here, $ U_{ij} $ is the submatrix of $ U $ indexed by rows corresponding to $ D_i $ and columns corresponding to $ C_j $, that is $ U_{ij} = U[D_i, C_j]$. 
		Let $ P = P(U_0) $, it is a $ k \times k $ matrix whose entry in row $ i $ and column $ j $ is defined to be $p_{ij} = \sum_{z \in D_i} u_{zw_i} $. 
		Remember that by~\Cref{thm:partitioned_graphs} it is equal to the sum of any row or any column of $U_{ij}$. 
		We have
		$$ VA' = \begin{pmatrix}
			U_0 & 0 \\ 0 & P
		\end{pmatrix} \begin{pmatrix}
			A_0 & {}^tL \\ L & 0
		\end{pmatrix} = \begin{pmatrix}
			U_0 A_0 & U_0 {}^tL \\ PL & 0 
		\end{pmatrix},$$
		and
		$$ B'V = \begin{pmatrix} B_0 & {}^tK \\ K & 0
		\end{pmatrix} \begin{pmatrix} U_0 & 0 \\ 0 & P
		\end{pmatrix} = \begin{pmatrix} B_0 U_0 & {}^tK P \\ KU_0 & 0
		\end{pmatrix}. $$
		Since $ BU= UA$, we have $B_0U_0 = U_0A_0$. 
		Now, let us verify that $KU_0 = PL$. 
		Let $1\leq i,j \leq k$. 
		We have $$(KU_0)[\{s_i\},C_j] = K_iU_{ij} \text{ and } (PL)[\{s_i\},C_j] = p_{ij}L_j.$$ 
		So, we need to prove that $K_i U_{ij} = p_{ij}L_j$. 
		Notice that $BU = UA$, so 
		$\begin{pmatrix} L_1 & \ldots & L_k \end{pmatrix} = \begin{pmatrix} K_1 & \ldots & K_k \end{pmatrix} U_0 $. 
		In particular, we have that $L_j = \sum_{m=1}^k K_m U_{mj} $. 
		Multiplying by $p_{ij}$ on the left and using the fact that $K_m$ is scalar-valued for $1\leq m\leq k$, we obtain that $p_{ij}L_j = \sum_{m=1}^k K_m p_{ij}U_{mj}$. 
		Since $p_{ij}U_{mj} = \delta_m^i U_{ij}$, we reach $p_{ij}L_j = K_iU_{ij}$, as desired. 
		This being true for all $1\leq i,j\leq k$, we have that $KU_0 = PL$. 
		
		Now notice that $P$ and $U_0$ are magic unitaries, so in particular they are unitaries. 
		So, we multiply $ KU_0 = PL $ on the left by $ P^{-1} = {}^tP $ and from the right side by $ U_0^{-1} = {}^tU_0 $. 
		We get
		$$ {}^tP K = {}^tP (KU_0) {}^tU_0 = {}^tP(PL){}^tU_0 = L{}^tU_0.$$
		Therefore, we have ${}^tK P = U_0{}^tL$, as required. 
		
		This completes the proof that $B'V = VA'$, hence $V$ is a quantum isomorphism from $H'$ to $G'$. 
		It remains to prove that $V$ preserves $R'$ and $S'$. 
		Recall that by definition $R'$ is the union of the blocks containing the $r_i$, and similarly for $S'$. 
		Let $x\in V(G')$ and $a\in V(H')$ such that $v_{ax}\neq 0$. 
		Assume that $a\in S'$. 
		Then there exists $1\leq i\leq k$ such that $a$ in is the same block as $s_i$. 
		We have $0 \neq v_{ax} = \sum_{j=1}^k v_{ax}v_{s_ir_j}$, so there exists $1\leq j\leq k$ such that $v_{ax}v_{s_ir_j} \neq 0$. 
		Since $a$ and $s_i$ are in a common block of $G'$, by~\Cref{lem:qi_blocks}, we have that $x$ and $r_j$ are in a common block. 
		Hence $x\in R'$. 
		Conversely, if $x\in R'$, then we show in the same way that $a\in S'$. 
		This shows that $V$ is a quantum isomorphism of anchored graphs, as desired.
		
		Let us now consider the case where $R$ is a block of $G$. 
		We have that $S$ is also a block of $H$ by~\Cref{lem:anchor_is_the_same}. 
		Let $R^\circ $ (resp.\ $S^\circ$) be the vertices in $R$ (resp.\ in $S$) that are not cut vertices in $G$ (resp.\ in $H$), so $R^\circ$ and $S^\circ$ can be empty. 
		Using the fact that $U$ is a quantum isomorphism of anchored graphs and~\Cref{lem:qi_blocks}, we obtain that $U$ is diagonal by blocks $U = \Diag(U_1,U_2,U_3)$ where $U_1 = U[S^\circ,R^\circ]$, $U_2 = U[S\setminus S^\circ, R\setminus R^\circ]$, and $ U_3 = U[V(H)\setminus S,V(G)\setminus R]$. 
		The adjacency matrix of $ G $ is of the form 
		$$ A = \begin{pmatrix}
			A_1 & ^tK & 0  \\
			K & A_2 & ^t L \\
			0 & L & A_3
		\end{pmatrix}, $$ 
		where $ A_1$, $ A_2 $, and $ A_3 $ are respectively the adjacency matrices of $ G[R^\circ] $, $ G[R\setminus R^\circ] $, and $ G \setminus R $. Similarly, the adjacency matrix of $ H $ is of the form $$ B = \begin{pmatrix}
			B_1 & ^tM & 0  \\
			M & B_2 & ^t N \\
			0 & N & B_3
		\end{pmatrix}, $$ 
		where $ B_1$, $ B_2 $, and $ B_3 $ are respectively the adjacency matrices of $ H[S^\circ] $, $ H[S\setminus S^\circ] $, and $ H \setminus S $. Therefore, the adjacency matrices of $ G' $ and $ H' $ are respectively as follows: 
		$$
		A' = \Adj(G') =  \begin{pmatrix}
			0 & 0 & 0  \\
			0 & 0 & ^t L \\
			0 & L & A_3
		\end{pmatrix} \text{ and }
		B' = \Adj(H') = \begin{pmatrix}
			0 & 0 & 0  \\
			0 & 0 & ^t N \\
			0 & N & B_3
		\end{pmatrix}.
		$$
		Now, recall that $\Gamma(U) = U$. 
		Since $BU= UA$ and $U = \Diag(U_1,U_2,U_3)$, it is immediate to check that $B'U = UA' $. 
		Let us check that $U$ preserves $R'$ and $S'$, that is, $u_{ax} = 0$ if $x \in R'$ and $a \notin S'$ or $x \notin R'$ and $a \in S'$. 
		
		Let $a\in V(H)$ and $x\in V(G)$ such that $u_{ax}\neq 0$. 
		First, assume that $x\in R'$. 
		Two cases are possible. 
		If $x\in R$, then since $u_{ax}\neq 0$ we have by assumption that $a\in S$. 
		Since $S\subseteq S'$ by~\Cref{lem:gamma_definition}, we obtain that $a\in S'$, as desired. 
		Otherwise, $x\in R'\setminus R$ and, by~\Cref{lem:gamma_definition}, $x$ is in a common block in $G$ with a vertex $y\in R$ such that $y$ is contained in exactly one more block in $G$ other than $R$. 
		Now, we have $0 \neq u_{ax} = u_{ax}\left(\sum_{c\in V(H)} u_{cy}\right) = \sum_{c\in V(H)} u_{ax}u_{cy}.$ 
		Hence there exists $b\in V(H)$ such that $u_{ax}u_{by}\neq 0$. 
		Since $x$ and $y$ are in a common block in $G$, by~\Cref{lem:qi_blocks}, $a$ and $b$ are in a common block in $H$. 
		Moreover, we have $u_{by}\neq 0$ and $y\in R$, so by assumption we obtain that $b\in S$. 
		Since by what precedes $U$ is a quantum isomorphism from $G'$ to $H'$ and $y$ is not a cut vertex in $G'$, by~\Cref{lem:qi_blocks}, $b$ is not a cut vertex in $H'$ neither. 
		Hence $b$ is contained in exactly one more block in $H$ than $S$. 
		So by~\Cref{lem:gamma_definition}, we have that $a\in S'$, as desired.
		
		Second, applying what precedes to $U^*$ as a quantum isomorphism from $(H,S)$ to $(G,R)$, assuming that $u_{ax} = [U^*]_{xa} \neq 0$ and that $a\in S'$, we obtain that $x\in R'$, as desired. 
		This shows that $U = \Gamma(U)$ is a quantum isomorphism of anchored graphs from $(G,R)$ to $(H,S)$ and concludes the proof.
	\end{proof}

	\subsection{Operations $\Delta_i$}
	Finally, we introduce operations $\Delta_1$ and $\Delta_2$ that allow us to recover the block structure of a connected anchored graph $(G,R)$ knowing the block structure of $\Gamma(G,R)$ (see~\Cref{lem:Gamma_Delta_tree}). This provides the final tool to complete the inductive step in some proofs in the next section. 
	
	Let $ (T_1, r_1), \dots, (T_k, r_k) $ be rooted 2-coloured trees (let us call the colours black and white). 
	We define two operations $\Delta_1$ and $\Delta_2$, which given these rooted coloured trees, return another rooted coloured tree.

	\emph{Operation $\Delta_1$.} 
	This operation is defined if every $ r_i $ is white. 
	In that case, we first consider the disjoint union $ T= \bigoplus_{i=1}^k T_i $, then we add a new black vertex $ r $ and join it to $ r_1, \dots, r_k $. 
	The rooted 2-coloured tree $ (T,r) $ obtained is $ \Delta_1 \left((T_1, r_1), \dots, (T_k, r_k)\right) $. 
	
	\emph{Operation $\Delta_2$.} 
	For this operation, the color of $ r_i$'s can be arbitrary. 
	Among $ (T_i, r_i) $'s, let $ \mathcal A = \{(A_1, a_1), \dots, (A_l, a_l)\} $ be the rooted trees whose roots are white. 
	Also, let $ \mathcal B = \{(B_1, b_1), \dots, (B_m, b_m)\} $ be the rooted trees on at least 2 vertices whose roots are black. 
	(The rooted trees on exactly 1 vertex and whose roots are black do not play a role in the definition of $\Delta_2 $.)
	First, for each $ 1 \leq i \leq l $, add a new new black vertex $ d_i $ to $ (A_i, a_i) $, join it to $ a_i $ and set $d_i $ to be the new root to define a new rooted 2-coloured tree $ (D_i, d_i) $. 
	Then, set $ T_0 = (\bigoplus_{i=1}^l D_i) \oplus ( \bigoplus_{i=1}^m B_i) $. 
	Add a new white vertex $ r$ to $ T_0$, join it to $ d_1, \dots, d_l, b_1, \dots, b_m $ to create a 2-coloured tree $T$. 
	Set $ r $ as the new root. 
	We define $ \Delta_2\left((T_1, r_1), \dots, (T_k, r_k)\right) = (T,r) $.
	
	Now, let us explain the connection between the operations $ \Delta_i $ and $\Gamma $. 
	Recall that we have set a 2-colouring of block trees in \Cref{sec:preliminaries} were the blocks are white and the cut vertices are black.
	
	\begin{lemma} \label{lem:Gamma_Delta_tree}
		Let $ (G,R) $ be a connected anchored graph on at least two vertices. Set $i=1$ if $R$ consists of a cut vertex of $G$ and $i=2$ otherwise. Let $(G_1, R_1), \dots, (G_k, R_k)$ be the connected components of $\Gamma(G,R)$, and set $(T_j, r_j) = \Tt(G_j, R_j)$ for $j \in \{1, \dots, k\}$. 
		Then, we have $ \Tt(G,R) = \Delta_i\left((T_1,r_1), \dots, (T_k,r_k)\right) $.
	\end{lemma}
	
	\begin{proof}
		Let $(G',R') = \Gamma(G,R)$. 
		
		First, if $R= \{v\}$ consists of a cut vertex of $G$, then $G' = \text{split}(G,v)$. Since $G$ has at least two vertices, then $k\geq 2$ and each $v_j$ is an internal vertex in $G_j$. Thus, $R_j = \rho(v_j)$ is a block of $G_j$. Therefore, $r_j$'s are white. The rest follows from the construction of $\Delta_1$. 
		
		Second, assume that $R$ is a block of $G$. Note that the connected components of $G'$ are in one-to-one correspondence with the vertices of $R$. Indeed, $R \cap V(G_j)$ is a singleton. We denote this vertex by $v_j$. Recall that $(T_j, r_j) = \Tt(G_j, R_j)$. It is straightforward to see that:
		\begin{itemize}
			\item[(i)] $r_j$ is white if and only if $v_j$ is an internal vertex in $G'$ if and only if $v_j$ is a cut vertex in $G$ that is contained in exactly two blocks of $G$,
			\item[(ii)] $r_j$ is black and $T_j$ has at least two vertices if and only if $v_j$ is a cut vertex in $G'$ if and only if $v_j$ is a cut vertex in $G$ contained in at least three blocks,
			\item[(iii)] $r_j$ is black and $T_j$ has one vertex if and only if $v_j$ is an isolated vertex in $G'$ if and only if it is an internal vertex of $G$.
		\end{itemize}
		See~\Cref{figGammaCase2}. The proof then follows from the construction of $\Delta_2$. 
	\end{proof}
	
	Finally, the next lemma allows us to obtain global isomorphisms between block structures. 
	
	\begin{lemma}\label{lem:extending_alphai_to_alpha}
		Let $ (T_1, r_1), \dots, (T_k, r_k) $ and $ (T'_1, r'_1), \dots, (T'_k, r'_k) $ be rooted 2-coloured trees. 
		Assume that for each $ i \in \{1, \dots, k\} $, there exists an isomorphism $ \alpha_i : (T_i, r_i) \to (T'_i, r'_i)  $ which preserves the roots and the colours. 
		\begin{enumerate}
			\item If every $ r_i$ is a white vertex, setting $ (T,r) = \Delta_1\left((T_1, r_1), \dots, (T_k, r_k)\right) $ and $ (T',r') = \Delta_1\left((T'_1, r'_1), \dots, (T'_k, r'_k)\right) $, then the extension $\alpha:  (T,r) \to  (T',r') $ of $ \alpha_i$'s defined as below is a graph isomorphism which preserves roots and colours:
			$$
			\alpha(v) = \begin{cases}
				\alpha_i(v) & \text{ if }  v \in V(T_i) \\
				r' & \text{ if } v=r.
			\end{cases}
			$$
			\item Setting $ (T,r) = \Delta_2\left((T_1, r_1), \dots, (T_k, r_k)\right) $ and $ (T',r') = \Delta_2\left((T'_1, r'_1), \dots, (T'_k, r'_k)\right) $ and using the notations from the definition of $\Delta_2 $, the extension $\alpha: (T,r) \to  (T',r') $ of $ \alpha_i$'s defined as below is a graph isomorphism which preserves roots and colours:
			$$
			\alpha(v) = \begin{cases}
				\alpha_i(v) &  \text{ if } v \in V(T_i) \\
				d_i' & \text{ if } v= d_i \\
				r' & \text{ if } v=r. 
			\end{cases}
			$$
		\end{enumerate}
	\end{lemma}
	
	\begin{proof}
		The proof is immediate from the definitions of $\Delta_1$ and $\Delta_2$. 
	\end{proof}
	
	\section{Block structures of quantum isomorphic graphs}\label{sec:pegah_conjecture}
	
	In this section we prove the main result of the paper. We first prove the
	result for anchored graphs and then deduce it for graphs.

	\begin{lemma} \label{lem:anchored_qiso_implies_blocks_iso}
		Let $ (G, R) $ and $ (H, S) $ be two quantum isomorphic connected anchored graphs. Then:
		\begin{enumerate}
			\item there exists $ \alpha: \Tt(G, R) \rightarrow \Tt(H, S) $ such that
			\begin{itemize}
				\item $\alpha$ is an isomorphism between the block trees of $ (G,R) $ and $ (H,S) $ preserving the roots and the colours,
				\item for every block $ b $ of $ G $, we have $ b \qi \alpha(b) $,
			\end{itemize}
			\item there exists $ \beta: \Bb(G, R) \rightarrow \Bb(H, S) $ such that 
			\begin{itemize}
				\item $\beta$ is an isomorphism between the block graphs of $ (G,R) $ and $ (H,S) $ preserving the anchors,
				\item for every block $ b $ of $ G $, we have $ b \qi \beta(b) $.
			\end{itemize}
		\end{enumerate}
	\end{lemma}
	
	\begin{proof}
		If $\abs{V(G)} = 1$, then the result is immediate. So, we may assume that $\abs{V(G)}>1$, which implies that $G$ has at least one block. 
		
		Let us start by proving Item (1) by induction on the number of blocks of $G$ when it has at least one block. 
		If $ G $ has exactly one block, i.e.\ it is 2-connected, then by~\Cref{coro:qi_2connected}, $H$ is 2-connected as well. Therefore, the block structures are trivial and the result follows immediately. 
		
		Assume now that the result holds for every connected anchored graph with at most $n$ blocks where $n \geq 1$, and assume that $G$ has $n+1$ blocks. Set $(G',R') = \Gamma(G, R)$ and $(H',S') = \Gamma(H,S)$. 
		By~\Cref{lem:qi_for_anchored_graphs}, $V = \Gamma(U)$ is a quantum isomorphism from $(G',R')$ to $(H',S')$. 
		Recall that these two anchored graphs are not connected. 
		We write $(G',R') = \bigoplus_{i=1}^k (G_i,R_i)$ and $(H', S') = \bigoplus_{i=1}^{k'} (H_i, S_i) $. 
		By~\Cref{lem:qi_for_disconnected_anchored_graphs}, up to reordering, we have that $k=k'$ and that $(G_i, R_i)$ is quantum isomorphic to $(H_i, S_i)$. 
		By induction hypothesis, for every $ i \in \{1, \dots, k\}$ there exists a colour-preserving isomorphism of rooted trees $\alpha_i: \Tt(G_i, R_i) \to \Tt(H_i, S_i) $ such that for every block $ b $ of $ G_i $, we have $ b \qi \alpha_i(b) $.
		So, by~\Cref{lem:extending_alphai_to_alpha}, we can find an isomorphism preserving the roots and the colours
		$$\alpha^{(j)}: \Delta_j(\Tt(G_1, R_1), \dots, \Tt(G_k, R_k)) \to \Delta_j(\Tt(H_1, S_1),\dots, \Tt(H_k, S_k)) $$ for $ j =1 $ when $\Delta_1$ is defined and for $ j=2 $ otherwise.
		
		Now, if $ R = \{r\}$ where $r$ is a cut vertex of $ G $, then by~\Cref{lem:anchor_is_the_same}, $ S = \{s\} $ also consists of a cut vertex of $ H $. 
		So by applying~\Cref{lem:Gamma_Delta_tree} twice, we have that $\Delta_1$ is defined in both equations below, and that we have: 
		$$ \Tt(G, R) = \Delta_1(\Tt(G_1, R_1),\dots, \Tt(G_k, R_k)) \text{ and } \Tt(H, S) = \Delta_1(\Tt(H_1, S_1),\dots, \Tt(H_k, S_k)). $$ 
		Thus $\alpha = \alpha^{(1)} $ is the desired isomorphism. Moreover, in this case, for every block $b$ of $G$, there exists $i \in \{1, \dots, k\}$ such that $b$ is a block of $G_i$. Hence, we have $\alpha(b) = \alpha_i(b) \qi b $,
		where the last relation follows from the induction hypothesis. 
		
		If $R$ is a block of $G$, then $S$ is a block of $H$ by~\Cref{lem:anchor_is_the_same}. 
		Moreover, by~\Cref{lem:Gamma_Delta_tree}, we have 
		$$ \Tt(G, R) = \Delta_2(\Tt(G_1, R_1),\dots, \Tt(G_k, R_k)) \text{ and } \Tt(H, S) = \Delta_2(\Tt(H_1, S_1),\dots, \Tt(H_k, S_k)). $$ 
		In this case, $\alpha = \alpha^{(2)} $ is the desired isomorphism. 
		Moreover, if $ b $ is a block of $ G $, then either $ b = R $ or there exists $i \in \{1, \dots, k\}$ such that $b$ is a block of $G_i$. In the former case, by~\Cref{lem:anchor_is_the_same}, we have $ \alpha(b) = \alpha(R) = S \qi R = b $, as desired. In the latter case, using the induction hypothesis, we have $ \alpha(b) = \alpha_i(b) \qi b$. 
		
		This concludes the proof of Item (1).
		
		Let us now prove Item (2). Note that $V(\Bb(G)) = \BB(G)  \subseteq V(\Tt(G))$, so we define $ \beta : \Bb(G, R) \rightarrow \Bb(H, S) $ by setting $\beta(b) = \alpha(b)$ for every block $b$ of $G$. 
		
		First of all, note that for every block $b$ of $G$, we have $ b \qi \alpha(b) = \beta(b) $. It remains to prove that $\beta$ is an isomorphism preserving the anchors.
		
		Now, let us verify that $\beta$ is a graph isomorphism from $ \Bb(G) $ to $ \Bb(H) $. 
		Note that $\alpha$ induces a bijection between the blocks of $G$ and the blocks of $H$, hence $\beta$ is a bijection, whose inverse is given by $\beta^{-1}(b) = \alpha^{-1}(b)$. 
		Now, let $ bb' \in E(\Bb(G)) $. So, there exists a cut vertex $ c $ of $ G $ which is in the intersection of $ b $ and $ b' $. That is, in $\Tt(G)$, the vertex $c$ is a common neighbour of the vertices $b$ and $b'$. Therefore, in $ \Tt(H) $, the vertex $\alpha(c)$ is a common neighbour of $ \alpha(b) = \beta(b) $ and $\alpha(b') = \beta(b')$. Thus, the blocks $\beta(b)$ and $\beta(b')$ have nonempty intersection, so $\beta(b)\beta(b') \in E(\Bb(H))$. This shows that $\beta$ is a graph morphism. By the same argument, using $\alpha^{-1}$ instead of $\alpha$, we conclude that $\beta^{-1}$ is a graph morphism as well. Hence, we have proved that $\beta$ is a graph isomorphism. 
		
		Finally, let $ \Bb(G, R) = (\Bb(G), \lambda_{G}(R)) $ and $ \Bb(H, S) = (\Bb(H), \lambda_{H}(S)) $. In order to prove that $\beta$ preserves the anchors, we need to show that $ \beta(\lambda_G(R)) = \lambda_H(S) $. There are four possible cases. 
		
		Case 1: $R= \{r\}$ consists of a cut vertex of $G$, in which case, by~\Cref{lem:anchor_is_the_same}, $S=\{s\}$ also consists of a cut vertex of $H$.
		
		Moreover, in this case, by definition of $\lambda$ in Section 4, we have 
		$$ \lambda_G(R) = \{ b \in V(\Bb(G))  \mid r \in b  \} = N_{\Tt(G)}(r) \text{ and } \lambda_H(S) = \{ b \in V(\Bb(H))  \mid s \in b  \} = N_{\Tt(H)}(s). $$
		Since $r$ and $s$ are roots of $\Tt(G,R)$ and $\Tt(H, S)$ respectively and since $ \alpha $ preserves the roots, we have 
		$$\beta(\lambda_G(R)) = \alpha(N_{\Tt(G)}(r)) = N_{\Tt(H)}(\alpha(r)) = N_{\Tt(H)}(s) = \lambda_H(S),$$ as desired. 
		
		Case 2: $R$ is a block of $G$ and it contains no cut vertices of $G$. In this case, $G$ is a 2-connected graph, and so is $H$ by~\Cref{coro:qi_2connected}. Thus the result is immediate. 
		
		Case 3: $R$ is a block of $G$ containing exactly one cut vertex of $G$, in which case $S$ is also a block of $H$, by~\Cref{lem:anchor_is_the_same}. Moreover, by construction, $R$ and $S$ are the roots of $\Tt(G)$ and $\Tt(H)$ respectively. This implies that $\beta(R) = \alpha(R) = S$. Furthermore, $R$ is a leaf of $\Tt(G)$, and since $\alpha$ is an isomorphism, $S$ is also a leaf in $\Tt(H)$. Therefore, $S$ is a block of $H$ containing exactly one cut vertex of $H$.
		
		Now, by~\Cref{lemforlambda3}, we have that $R$ and $S$ are internal vertices of $\Bb(G)$ and $\Bb(H)$ respectively. Thus each of them is contained in a unique block, namely $\lambda_G(R)$ and $\lambda_H(S)$ respectively. This entails that 
		$ \lambda_G (R) = N_{\Bb(G)}[R] $, the closed neighbourhood of $R$ in $\Bb(G)$, and similarly, $ \lambda_H(S) = N_{\Bb(H)}[S] $. Therefore, we have
		$$
		\beta(\lambda_G(R)) = \beta(N_{\Bb(G)}[R]) = N_{\Bb(H)}[\beta(R)] = N_{\Bb(H)}[S] = \lambda_H(S),
		$$
		as desired.
		
		Case 4: $R$ is a block of $G$ containing at least two cut vertices of $G$. Since $R$ and $S$ are roots of the respective block trees, and since $\alpha$ preserves the roots, we have $\beta(R) = \alpha(R) = S$. Moreover, in $\Tt(G)$, the vertex $R$ has at least two neighbours, which implies that $S$ has at least two neighbours in $\Tt(H)$ as well. Now, both $R$ and $S$ contain at least two cut vertices, so by~\Cref{lemforlambda2}, they are cut vertices in $\Bb(G)$ and $\Bb(H)$ respectively. Hence, we have $\lambda_{G}(R) = \{R\}$ and $\lambda_{H}(S) = \{S\}$, which implies that 
		$$
		\beta(\lambda_G(R)) = \beta(\{R\}) = \{\beta(R)\}= \{S\} = \lambda_H(S),
		$$
		as desired. This concludes the proof of the theorem. 
	\end{proof}

	Recall from~\Cref{sec:preliminaries} that for a connected graph $G$, we denote by $Z(G)$ the center of $G$, and that we define $\bZ(G)$ to be $Z(G)$ if $Z(G)$ consists of a single cut vertex, and to be the unique block containing $Z(G)$ otherwise. In particular, $(G, \bZ(G))$ is an anchored graph. 
	
	The following lemma allows us to apply~\Cref{lem:anchored_qiso_implies_blocks_iso} to general graphs and obtain our main result.

	\begin{lemma} \label{lem:center-s-block-is-preserved} 
		Let $U$ be a quantum isomorphism from a connected graph $G$ to a graph $H$. Then $U$ is a quantum isomorphism of anchored graphs from $(G, \bZ(G))$ to $(H, \bZ(H))$. That is, we have $U= \Diag(U_1, U_2) $ where $U_2$ is a quantum isomorphism from the graph induced on $\bZ(G)$ to the graph induced on $\bZ(H)$. 
	\end{lemma}
	\begin{proof}
		To prove the result, one has to show that for all $x\in V(G)$ and $a\in V(H)$, if $u_{ax} \neq 0 $, then $x\in \bZ(G)$ if and only if $a\in \bZ(H)$. We start by proving the following claim: for every $x\in \bZ(G)$ and $a\in V(H)$ such that $u_{ax} \neq 0$, we have that $a \in \bZ(H)$. To prove the claim, we fix $x\in \bZ(G)$ and $a\in V(H)$ such that $u_{ax} \neq 0$.
		
		By~\cite[Corollary 3.3]{Meunier2023}, we have that $U=\Diag(V,W)$ where $V$ and $W$ are two magic unitaries and $W$ is indexed by $Z(H) \times Z(G)$. In particular, $W$ is a quantum isomorphism from $G[Z(G)]$ to $H[Z(H)]$. 
		
		Firstly, we consider the case where $ Z(G) = \{r\}$ and $r$ is a cut vertex of $G$. In this case, we have $\bZ(G) = Z(G) = \{r\} $, so $x=r$. 
		Since the graphs induced on $Z(G)$ and $Z(H)$ are quantum isomorphic, we have $\abs{Z(H)} = 1$. Letting $Z(H)=\{s\}$, we have $W=(u_{sr})$ with $u_{sr}=1$. Since $r=x$ and $U$ is a magic unitary, we have that $a=s \in Z(H) \subseteq \bZ(H)$, as desired.

		Secondly, let us consider the case  where $ Z(G) = \{r\}$ and $r$ is an internal vertex of $G$. In this case $\bZ(G)$ is the unique block containing $r$. Once again, since  $Z(G)$ and $Z(H)$ are quantum isomorphic, we have $\abs{Z(H)} = 1$, say $Z(H)=\{s\}$. This implies that $W=(u_{sr})$ with $u_{sr}=1$. Thus, by~\Cref{lem:qi_blocks}, we have that $s$ is an internal vertex of $H$, and there exists a unique block containing $s$ which is $\bZ(H)$. We have $ 0 \neq u_{ax} = u_{ax}u_{sr} $. Since $x$ and $r$ are in a common block, by~\Cref{lem:qi_blocks}, so are $s$ and $a$. Since $s$ is only in the block $\bZ(H)$, we have $a \in \bZ(H)$, as desired. 
		
		Finally, we consider the case where $\abs{Z(G)} > 1$. In this case, let $ y, z \in Z(G)$ with $y \neq z$. We have
		$$
		0 \neq u_{ax} = u_{ax} \left( \sum_{c \in V(H)} u_{cz} \right) = \sum_{c \in V(H)} u_{ax} u_{cz}.
		$$
		So, there exists $c \in V(H)$ such that
		\begin{equation} \label{eq:1}
			u_{ax} u_{cz} \neq 0. 
		\end{equation} 
		Moreover, we have 
		$$
		0 \neq u_{ax} u_{cz} = u_{ax} \left( \sum_{b \in V(H)} u_{by} \right) u_{cz} = \sum_{b \in V(H)} u_{ax} u_{by} u_{cz}.
		$$
		Thus, there exists $b \in V(H)$ such that 
		\begin{equation} \label{eq:2}
			u_{ax} u_{by} u_{cz} \neq 0. 
		\end{equation} 
		From~\Cref{eq:2}, we have $u_{by} u_{cz} \neq 0 $. Since $y $ and $z$ are distinct, we have $ b \neq c $ by orthogonality of the rows of $U$. Moreover, since $u_{by} \neq 0$ and $u_{cz} \neq 0$, since $U$ is diagonal by block, we have that $ b\in Z(H)$ and $c \in Z(H)$. So in particular, $b, c \in \bZ(H)$. Since by~\Cref{prop:properties_of_blocks}, the blocks of $H$ intersect in at most one vertex, we have that 
		\begin{itemize}
			\item[(i)] $\bZ(H)$ is the unique block of $H$ containing $b$ and $c$. 
		\end{itemize}
		Now, if $a = b$ or $a=c$, we have $a \in \bZ(H)$, as desired. So, from now on we assume that $a$ is distinct from both $b$ and $c$.
		Applying~\Cref{lem:qi_blocks} to~\Cref{eq:1}, since $ x$ and $z$ are in a common block $\bZ(G)$, we obtain that 
		\begin{itemize}
			\item[(ii)] there exists a block $C$ containing both $a$ and $c$.
		\end{itemize}
		Similarly, using~\Cref{eq:2}, we have $u_{ax}u_{by} \neq 0$, and since $x $ and $y $ are in a common block $\bZ(G)$, applying~\Cref{lem:qi_blocks}, we obtain that 
		\begin{itemize}
			\item[(iii)] there exists a block $B$ containing both $a$ and $b$.
		\end{itemize}
		If neither $B$ nor $C$ is equal to $\bZ(H)$, then we have $B \neq C$ as well, since 
		$$ B \cap \bZ(H) = \{b\} \neq \{c\} = C \cap \bZ(H).$$
		Take a path $P_{ab}$ entirely in $B$ from $a$ to $b$, a path $P_{bc}$ entirely in $\bZ(H)$ from $b$ to $c$, and a path $P_{ca}$ entirely in $C$ from $c $ to $a$. Then, $P_{ab}P_{bc}P_{ca}$ is a cycle in $G$ passing through three distinct blocks, a contradiction to item (3) of~\Cref{prop:properties_of_blocks}. So, either $B = \bZ(H)$ or $C=\bZ(H)$, which implies $a \in \bZ(H)$, as desired. 
		This completes the proof of the claim.
		
		Now, let $a \in \bZ(H)$ and $x \in V(G)$ such that $u_{ax} \neq 0$. Note that $U^*$ is a quantum isomorphism from $H$ to $G$ and that $[U^*]_{xa} = u_{ax} \neq 0$. By applying the precedent claim to $U^*$, we obtain $x \in \bZ(G)$. This completes the proof of the lemma.
	\end{proof}

	Before proving our main theorem, let us point out some possible generalisations of the previous result. The first two statements of \Cref{remark:generalisations} can be easily proved with an inductive argument using the operation $\Gamma$ -- similar to the ones of \Cref{lem:anchored_qiso_implies_blocks_iso} and \Cref{thm:degree_cut}. We do not include their proofs here both because we do not use these generalisations and for the sake of brevity.
	
	\begin{remark} \label{remark:generalisations}
		 Let $U$ be a quantum isomorphism from a graph $G$ to a graph $H$. 
		\begin{enumerate}
			\item The proof of \Cref{lem:center-s-block-is-preserved} does not use in a key way the fact that the fixed subsets  are the centers of the graphs. Indeed, a similar proof would give the following result. Let $S \subseteq V(G)$ and $T \subseteq V(H)$ be such that $u_{ax} = 0$ if $x \in S$ and $a \notin T$ or if $x \notin S$ and $a \in T$. If~$S$ is contained in a unique block $b$ of $G$, then $T$ is contained in a unique block $d$ of $H$ and moreover, $U = \Diag(U_1, U_2)$ is diagonal by block where $U_2$ is a quantum isomorphism from~$b$ to~$d$.
			
			\item Note that for any graph $F$, we have that $\bZ(F)$ is a vertex of the block tree $\Tt(F)$. \Cref{lem:center-s-block-is-preserved} shows that these vertices are fixed by $U$. One can furthermore show that $U$ preserves the distances to these vertices in the block trees. More precisely, if $u_{ax} \neq 0$ for $x \in V(T)$ and $a \in V(H)$, then we have $ d_{\Tt(G)}(\rho_G(x), \bZ(G)) = d_{\Tt(H)}(\rho_H(a), \bZ(H))$. 
						
			\item  Finally, we remark that~\Cref{lem:center-s-block-is-preserved,lem:anchor_is_the_same} together imply that $Z(G)$ consists of a cut vertex of $G$ if and only if $Z(H)$ consists of a cut vertex of $H$.
		\end{enumerate}
	\end{remark}
	
	Now we can prove the main theorem of this section.
	
	\begin{theorem}\label{thm:pegah_conjecture}
		If $G$ and $H$ are two quantum isomorphic connected graphs, then 
		\begin{enumerate}
			\item there exists a morphism $\alpha: \Tt(G) \rightarrow \Tt(H)$ such that
			\begin{itemize}
				\item $\alpha$ is an isomorphism between the block trees of $G$ and $H$ that preserves the colours,
				\item for every block $b$ of $G$, we have $b \qi \alpha(b)$,
			\end{itemize}
			\item there exists a graph morphism $\beta: \Bb(G) \rightarrow \Bb(H)$ such that 
			\begin{itemize}
				\item $\beta$ is an isomorphism between the block graphs of $G$ and $H$,
				\item for every block $b$ of $G$, we have $b \qi \beta(b)$.
			\end{itemize}
		\end{enumerate}
	\end{theorem}
	
	\begin{proof}
		By~\Cref{lem:center-s-block-is-preserved}, we have that $(G, \bZ(G))$ and $(H, \bZ(H))$ are quantum isomorphic connected anchored graphs. Thus the result follows from~\Cref{lem:anchored_qiso_implies_blocks_iso}.
	\end{proof}

	Next, we prove a local version of~\Cref{thm:pegah_conjecture}, namely that quantum isomorphisms preserve the blocks containing a given vertex.
	
	\begin{theorem}\label{thm:degree_cut}
		Let $ U $ be a quantum isomorphism from $ G $ to $ H $. Let  $ x \in V(G) $ and let $b_1, \dots, b_k$ the blocks of $G$ containing $x$. Let $ a \in V(H) $ and let $d_1, \dots, d_l$ the blocks of $H$ containing $a$. If $ u_{ax} \neq 0 $, then $k=l$, and up to reordering $G[b_i] \qi H[d_i]$ for every $ 1 \leq i \leq k$.
	\end{theorem}
	
	\begin{proof}
		By~\Cref{thm:partitioned_graphs,lem:connected_components_partition}, it is enough to prove the theorem for connected graphs.
		
		For a graph $F$ and a vertex $ v\in V(F)$, let $\BB_F(v)$ be the set of blocks of $F$ containing~$v$. 
		
		We first prove the following statement for anchored graphs: \emph{let $ U $ be a quantum isomorphism from a connected anchored graph $ (G, R) $ to an anchored graph $ (H,S) $ and let $ x \in V(G) $ and $ a \in V(H) $. 
			If $ u_{ax} \neq 0 $, then there is a one-to-one correspondence between $\BB_G(x)$ and $\BB_H(a)$ such that the graphs induced on the corresponding blocks are quantum isomorphic.}
		
		We prove the statement by induction on the number of blocks of $G$ (which, by~\Cref{thm:pegah_conjecture}, is equal to the number of blocks of $ H $).
		
		If $G=K_1$, the result is immediate. Thus, we may assume that $G$ has at least one block.	
		Also, if $G$ has exactly one block, then by~\Cref{coro:qi_2connected}, $H$ is 2-connected as well. 
		Therefore, in this case, $G$ and $H$ are the only blocks containing $x$ and $a$ and the result is immediate.
		
		Now let $U$, $(G,R)$, $(H,S)$, $x\in V(G)$, and $a\in V(H)$ be as in the statement above and assume that the graphs are not 2-connected. 
		Also, assume that the statement holds for any other pair of connected anchored graphs with strictly less blocks. 
		Set $(G',R') = \Gamma(G,R)$ and $(H',S')=\Gamma(H,S)$, and note that by~\Cref{lem:qi_for_anchored_graphs}, $W=\Gamma(U)$ is a quantum isomorphism from the former to the latter. 
		Let $(G_0,R_0)$ and $(H_0,S_0)$ be the connected components of $G$ and $H$ containing $x$ and $a$ respectively. 
		Note that the number of blocks of $G_0$ and $H_0$ is less than that of $G$. Moreover, by~\Cref{lem:qi_for_disconnected_anchored_graphs}, $W[H_0, G_0]$ is a quantum isomorphism from $(G_0, R_0)$ to $(H_0,S_0)$.
		
		If $x\notin R$ (thus $a\notin S$), then $\BB_{G_0}(x)= \BB_{G'}(x)= \BB_{G}(x)$ and $\BB_{H_0}(a)=\BB_{H'}(a)=\BB_H(a)$. 
		Thus, by applying the induction hypothesis to $(G_0,R_0)$ and $(H_0,S_0)$, we have the required result.
		So, for the rest of the proof, we assume that $x\in R$ (and consequently, $a \in S$). 
		
		First, if $R=\{r\}$ consists of a cut vertex of $G$ and $S=\{s\}$ consists of a cut vertex of $H$, then $x=r$ and $a=s$. 
		So, let $\BB_G(x) = \{ b_1, \dots, b_k\} $ and $\BB_H(a) = \{d_1, \dots, d_l \}$. By construction, there exists a natural one-to-one correspondence between $ b_i $'s and the anchors $R_1, \dots, R_k$ of the connected components of $(G',R')$ such that $G[b_i] = G'[R_i]$. 
		Similarly, there exists a natural one-to-one correspondence between $ d_i $'s and the anchors $ S_1, \dots, S_l$ of the connected components of $(H',S')$ such that $H[d_i] = H'[S_i]$.
		Since $(G',R')$ and $(H',S')$ are quantum isomorphic as anchored graphs, using~\Cref{lem:qi_for_disconnected_anchored_graphs}, the result follows.
		
		Second, assume that $R$ is a block of $G$ containing $x$ and $S$ is a block of $H$ containing $a$. 
		If $x$ is an internal vertex of $R$, then $a$ is also an internal vertex of $S$ by~\Cref{lem:qi_blocks}. So $R$ and $S$ are the only blocks containing $x$ and $a$ respectively. The result then follows from~\Cref{lem:anchor_is_the_same}.  
		If $x$ is a cut vertex of $G$, then by~\Cref{lem:qi_blocks}, $s$ is a cut vertex of $H$. In this case, we have 
		\[\BB_{G_0}(x) = \BB_{G'}(x) = \BB_G(x) \setminus \{R\} \text{ and } \BB_{H_0}(a) = \BB_{H'}(a) = \BB_H(a) \setminus \{S\}.\]
		By applying the induction hypothesis to $(G_0,R_0)$ and $(H_0,S_0)$ as above and by using~\Cref{lem:anchor_is_the_same}, we obtain the required result. 
		
		This completes the proof of the statement for anchored graphs.
		Now, by~\Cref{lem:center-s-block-is-preserved}, $U$ is a quantum isomorphism from $(G, \bZ(G))$ to $(H, \bZ(H))$ such that $u_{ax} \neq 0$. Hence the result follows from the statement proved above.
	\end{proof}
	
	Let us close this section with two corollaries of~\Cref{thm:pegah_conjecture}.

	\begin{corollary}\label{cor:numberofblocksandcutvertices}
		Two quantum isomorphic graphs have the same number of blocks and the same number of cut vertices. 
	\end{corollary}
	\begin{proof}
		By~\Cref{lem:connected_components_partition}, it is enough to prove the statement for connected components, which is in turn immediate from~\Cref{thm:pegah_conjecture}.
	\end{proof}
	
	\begin{corollary} \label{cor:minimalpair}
		Let $n$ be the smallest integer such that there exists a pair of graphs on $n$ vertices which are quantum isomorphic but not isomorphic. Let $(G,H)$ be a pair of quantum isomorphic graphs on $n$ vertices. Then one of the following holds:
		\begin{enumerate}
			\item $G$ and $H$ are isomorphic, 
			\item $G$ and $H$ are connected and for the isomorphisms $\alpha$ and $\beta$ from~\Cref{thm:pegah_conjecture} we have furthermore that $\alpha(b) = b = \beta(b)$ (i.e. these are classically isomorphic) for every block $b$ of $G$,
			\item $G$ and $H$ are 2-connected.
		\end{enumerate}
		Moreover, the statement holds for $(G^c, H^c)$ as well.
	\end{corollary}
	\begin{proof}
		Let us assume that $G$ and $H$ are not isomorphic. It follows from~\Cref{lem:connected_components_partition} that $G$ and~$H$ are connected because otherwise, their connected components would be two-by-two isomorphic, since they each have strictly less than $n$ vertices. 
		Furthermore, assume that $G$ and $H$ are not 2-connected. In this case, each block of $G$ and of $H$ has strictly less than $n$ vertices, hence the result follows from~\Cref{thm:pegah_conjecture}. 
	\end{proof}

\end{document}